\newtheorem{thm}{Theorem}[section]
\newtheorem{lem}[thm]{Lemma}
\newtheorem{prop}[thm]{Proposition}
\theoremstyle{definition}
\theoremstyle{remark}
\newtheorem{rem}[thm]{Remark}
\numberwithin{equation}{section}
\newcommand{\R}{\mathbb R}
\newcommand{\eps}{\epsilon}
\newcommand{\p}{\partial}
\theoremstyle{definition}
\theoremstyle{remark}
\numberwithin{equation}{section}
\theoremstyle{claim}
\thanks{DD is supported by a NSF grant (RTG 1937254). DJ is supported by a Simons Foundation Grant (601948 DJ). HS is supported by Swedish Research Council}
\begin{document}
\title[Inhomogeneous global minimizers]{Inhomogeneous global minimizers to the one-phase free boundary problem}
\author{Daniela De Silva}
\address{Department of Mathematics, Barnard College, Columbia University, New York, NY, 10027}
\email{\tt  desilva@math.columbia.edu}
\author{David Jerison}
\address{Department of Mathematics, Massachusetts Institute of Technology, Cambridge MA,  }
\email{jerison@math.mit.edu}

\author{Henrik Shahgholian}
\address{Department of Mathematics, KTH  Royal Institute of Technology, Stockholm, Sweden}
\email{henriksh@math.kth.se}
\begin{abstract} Given a global 1-homogeneous minimizer $U_0$ to the Alt-Caffarelli energy functional, with $sing(F(U_0)) = \{0\}$, we provide a foliation of the half-space $\R^{n} \times [0,+\infty)$ with dilations of graphs of global minimizers $\underline U \leq U_0 \leq \bar U$ with analytic free boundaries at distance 1 from the origin.
\end{abstract}

\maketitle

%\makeatletter
%\vspace{-2em}
%{\centering\enddoc@text}
%\let\enddoc@text\empty 
%\makeatother

\section{Introduction}
\subsection{Background}
In this paper we are interested in minimizers to the energy functional,
\begin{equation}\label{J}
J(u) = J_\Omega(u)= \int_\Omega  ( |\nabla u|^2 + \chi_{\{u > 0\}})\;dx ,
\end{equation}
where  $\Omega$ is a domain in $\R^n$  ($n\geq 2$) and $u\geq 0.$ Minimizers of $J$ were first investigated systematically by Alt and Caffarelli. 
Two fundamental properties are proved in the pioneering article \cite{AC}, namely, the Lipschitz regularity of minimizers and the regularity of ``flat'' free boundaries.  These in turn, give the almost-everywhere regularity of minimizing free boundaries. %that is the Lipschitz regularity of minimizers and the regularity of ``flat" free boundaries, which in turns gives the almost-everywhere regularity of minimizing free boundaries. 
The viscosity approach to the regularity of the associated free boundary problem 
\begin{equation}\label{visc}\begin{cases}
\Delta u=0 \quad \text{in $\Omega^+(u):= \Omega \cap \{u>0\}$}\\
|\nabla u|=1 \quad \text{on $F(u):= \p \{u>0\} \cap \Omega,$}
\end{cases}\end{equation}
was later developed by Caffarelli in \cite{C1,C2}. There is a wide literature on this problem and the corresponding two-phase problem, and for a comprehensive treatment we refer the reader to \cite{CS} and the references therein.

Here we are interested in one-phase global minimizers, that is, minimizers $u \geq 0$ of $J$ over all balls $B_R \subset \R^n$ among functions that agree with $u$ on $\partial B_R$.
 In dimension $n=2$, Alt and Caffarelli \cite{AC} showed that (up to rotation) the only  global minimizer of $J$ is $x_n^+$. The same result was obtained by Caffarelli, Jerison and Kenig in dimension $n=3$ \cite{CJK}, and by Jerison and Savin in dimension $n=4$ \cite{JS}. The classification of global minimizers implies the smoothness of minimizing free
boundaries  in both the one-phase and two-phase problem in dimension $n\leq 4$. These results rely on the flatness theorem and on the Weiss Monotonicity formula (see Section 2 for the precise statement)  which allows one to consider only the case of 1-homogeneous minimizers. It is  conjectured that the results above remain true up to dimension $n\leq 6$. On
the other hand De Silva and Jerison provided in \cite{DJ} an example of a singular 1-homogeneous minimal solution in dimension $n = 7$. In \cite{H},  Hong studied the stability of Lawson-type cones for \eqref{visc} in low dimensions and showed that in dimension $n = 7$ there is another stable cone besides the minimizer in \cite{DJ}. 
%
%In the context of critical point,  Hauswirth, Helein, and Pacard  \cite{HHP} discovered an explicit  family of simply-connected planar regions (so-called exceptional domains)
%$$\Omega_a := \{(x_1, x_2) \in \R^2 : |x_1/a| < \pi/2 + \cosh(x_2/a)\}, a > 0$$
%whose boundary consists of two curves (hairpins), and a positive harmonic function $H_a(x) = aH_1(x/a)$ on $\Omega_a$ that satisfies the free boundary conditions $H_a = 0$ and $|\nabla H_a |=1$ on $\p \Omega_a$. Extending $H_a$ to be zero in the complement of $\Omega_a$, we have a non-trivial entire solution to \eqref{visc} (an explicit formula for $H_a$ is given using conformal mappings). In \cite{JK}, Jerison and Kamburov characterized blow-up limits of classical solutions to \eqref{visc} in the disk, with simply-connected positive phase, as either (a) half-plane, (b) two-plane or (c) hairpin solutions. This relates to previous results of classifications of entire solutions with simply-connected positive phase due to Khavinson, Lundberg and Teodorescu \cite{KLT} and Traizet  \cite{T}. Traizet showed that classical entire solutions satisfying that no connected component of $F(u)$ is compact in the open disk must be one of the forms (a), (b), or (c). Khavinson et al showed that the same conclusion is true under a natural, weak regularity assumption on the free boundary known as the Smirnov property. 
%
%

\subsection{Main result}

Let $U_0 \geq 0$ be a global energy minimizer to $J$, and assume that $U_0$ is homogeneous of degree 1, and $F(U_0) \setminus \{0\}$ is an analytic hypersurface, while $0$ is a singular point (hence, by the discussion above, $n\geq 5$). In this paper we construct smooth inhomogeneous global minimizers asymptotic to $U_0$ at infinity. 
In fact, $U_0$ is trapped between two global smooth inhomogeneous minimizers obtained by perturbing away the singularity, and whose dilations foliate the whole space. 
The existing theory of  one-phase minimizers establishes a strong resemblance between the theory of free boundaries and the theory of minimal  surfaces. Our result reaffirms this similarity, providing an analogue of Hardt and Simon's result in the context of area minimizing cones \cite{HS}. 

In order to state our main theorem precisely, we recall some known facts and refer to \cite{JS} for further details on the linearized problem (see also Section 3). Let 
$H>0$ denote the mean curvature of $\p \{U_0>0\}$ oriented towards the complement of the connected set $\{U_0>0\}$,  and 
 consider the linearized problem associated to $U_0$: 
\begin{equation}\label{wintro}\begin{cases}
\Delta w=0 \quad \text{in $\{U_0>0\}$}\\
\p_\nu w + Hw=0 \quad \text{on $F(U_0) \setminus \{0\}$},\\
\end{cases}\end{equation} with $\nu$ the interior unit normal to $F(U_0).$
Let  $w(x):= |x|^{-\gamma} \bar v(\theta)$ with $\bar v$ the first eigenfunction of the Laplacian on $\mathbb S^{n-1} \cap \{U_0>0\}$, 
$$\Delta_{\mathbb S^{n-1}} \bar v= \lambda \bar v, \quad \text{on $\mathbb S^{n-1} \cap \{U_0>0\}$,}$$
with Neumann boundary condition $$\p_\nu\bar v + H \bar v=0, \quad \text{on $F(U_0) \cap \mathbb S^{n-1}.$}$$ Then $\bar v>0$ on  $\mathbb S^{n-1} \cap \overline{\{U_0>0\}}$ and $\lambda>0.$
It is easily computed (see Section 4) that  $w$ solves \eqref{wintro}
as long as $\gamma=\gamma_\pm \in \R, \gamma_+\geq \gamma_->0,$ satisfy $\gamma^2 - (n-2)\gamma + \lambda=0$. If $\gamma_- \neq \gamma_+$ we call
\begin{equation}\label{defv}V_{\gamma_\pm}(x) := |x|^{-\gamma_{\pm}} \bar v.\end{equation}
By abuse of notation, if $\gamma_-=\gamma_+$ and we call $\gamma_0$ this common value, we set 
\begin{equation}\label{defv2}V_{\gamma_-}(x):= |x|^{-\gamma_0} (\ln |x|+1) \; \bar v, \quad V_{\gamma_+}(x)= |x|^{-\gamma_0} \; \bar v.\end{equation}

Our main result reads as follow. As usual, $B_R(x_0) \subset \R^n$ denotes the ball of radius $R>0$ and center $x_0$, and when $x_0=0$ we drop the dependence on it. Also, given a function $u \geq 0$ defined on $\R^n$, we define for $t>0$
$$u_t(x)= \frac{1}{t}u(tx), \quad \Gamma(u_t):= \{(x, u_t(x)): x \in \overline{\{u_t>0\}}\}.$$
 Finally, constants that depend only on the ingredients i.e. $n, U_0,$ are called universal.

\begin{thm}\label{main} Let $U_0$ be as above. There exist $\bar U, \underbar U$ global minimizers to \eqref{J}, such that 
\begin{enumerate}
\item $\underbar U \leq U_0 \leq \bar U,$ and $dist(F(\bar U), \{0\}) = dist(F(\underbar U), \{0\}) =1;$\\ \item $F(\bar U), F(\underbar U)$ are analytic hypersurfaces;\\ 
\item there exist universal constants $R>0$ large, $\alpha'>0$ small, such that if $\gamma_- \neq \gamma_+$,
 $$\bar U(x) =U_0(x)+ \bar a V_{\gamma_-}(x)+ O(|x|^{-\gamma_- - \alpha'}), \text{in $\{U_0>0\} \setminus B_R$},$$  or $$ \bar U(x)=U_0(x) + \bar a V_{\gamma_+}(x)+ O(|x|^{-\gamma_+ - \alpha'}), \text{in $\{U_0>0\} \setminus B_R$,}$$ for some $\bar a>0$ universal.
 The analogous statement for $\underline U$ holds in $\{\underline U >0\} \setminus B_R$ with $\underline a <0$.
 If $\gamma_-=\gamma_+$, then
  $$\bar U(x) =U_0(x)+ \bar a V_{\gamma_-}(x)+ \bar b V_{\gamma_+}+ O(|x|^{-\gamma_0 - \alpha'}), \text{in $\{U_0>0\} \setminus B_R$},$$ with $\bar a \geq 0$ and if $\bar a=0$ then $\bar b>0.$  The analogous statement for $\underline U$ holds in $\{\underline U >0\} \setminus B_R$ with $\underline a \leq 0$ and $\underline b<0$ whenever $\underline a=0;$
 \\ 
\item for any $x \in \{U_0=0\}^o$, the ray $\{t x$, $t>0\}$, intersects $F(\bar U)$ in a single point and the intersection is transverse; similarly, for any $x \in  {\{U_0>0\}}$, the ray $\{t x$, $t>0\}$, intersects $F(\underline U)$ in a single point and the intersection is transverse; \\
\item the graphs  $\bar \Gamma_t:= \Gamma(\bar U_t), \underline \Gamma_t:=\Gamma(\underline{U_t})$ foliate the half-space $\R^{n} \times [0,+\infty)$, i.e., 
$$\R^{n} \times [0,+\infty) = \bigcup_{t \in (0,\infty)} (\bar \Gamma_t \cup \underline \Gamma_t);$$\\
\item if $V$ is a global minimizer to $J$ and  $V \geq U_0$ (resp. $V \leq U_0$), then $$V \equiv \bar U_t, \quad (\text{resp. $V \equiv \underline U_t$}),$$ for some $t>0,$ unless $V \equiv U_0.$ 
\end{enumerate}
\end{thm}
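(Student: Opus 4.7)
The plan is to prove statement (6) by a sliding / strict maximum principle argument, leveraging the foliation from (1)--(5) together with a Liouville-type asymptotic expansion for $V$ analogous to item (3). Throughout I treat the case $V \geq U_0$ and $V \not\equiv U_0$, since the case $V \leq U_0$ is symmetric using the family $\{\underline U_t\}$ instead.

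First I would establish an asymptotic expansion for $V$ at infinity of the same form as (3). Since $V$ is a global minimizer with $V \geq U_0$, any blow-down limit $V_\infty(x) = \lim_{R_k \to \infty} R_k^{-1} V(R_k x)$ is a $1$-homogeneous global minimizer satisfying $V_\infty \geq U_0$; combining the Weiss monotonicity formula with a strict comparison principle for 1-homogeneous minimizers that share the cone $\{U_0 > 0\}$ forces $V_\infty = U_0$. A quantitative version of this convergence, together with the spectral analysis of the linearized problem \eqref{wintro} developed in Section 3, then yields on $\{U_0 > 0\} \setminus B_R$ an expansion
\[
V(x) = U_0(x) + a_V V_{\gamma_-}(x) + O\bigl(|x|^{-\gamma_- - \alpha'}\bigr),
\]
with $a_V \geq 0$ (by $V \geq U_0$ and positivity of the first eigenfunction $\bar v$); if $a_V = 0$ one iterates to pick up the $V_{\gamma_+}$ mode, and in the degenerate case $\gamma_- = \gamma_+$ a two-term expansion analogous to the one in (3) holds.

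Next I would calibrate the foliation. From (3), the leaves satisfy $\bar U_t(x) = U_0(x) + \bar a\, t^{-\gamma_- - 1} V_{\gamma_-}(x) + \dots$, so I pick $t_0 > 0$ with $\bar a\, t_0^{-\gamma_- - 1} = a_V$ (or, when $a_V = 0$, match the $V_{\gamma_+}$ coefficients instead); the difference $V - \bar U_{t_0}$ then decays strictly faster than the leading linearized mode. Now run the sliding: let
\[
\tau^* = \inf\bigl\{\,t > 0 \colon \bar U_t \geq V \text{ on all of } \R^n\,\bigr\}.
\]
Since $\bar U_t \to +\infty$ pointwise on $\{U_0 > 0\}$ as $t \to 0^+$ while $\bar U_t \to U_0 \leq V$ as $t \to \infty$, and since (5) guarantees that $\{t : \bar U_t \geq V\}$ is an interval, $\tau^*$ is a well-defined positive number and $\bar U_{\tau^*} \geq V$ by continuity. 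By minimality of $\tau^*$ the graphs $\Gamma(V)$ and $\bar \Gamma_{\tau^*}$ must touch somewhere: if the touching is in the common positivity set, the strong maximum principle applied to the nonnegative harmonic function $\bar U_{\tau^*} - V$ gives $\bar U_{\tau^*} \equiv V$ there; if it is at a common free boundary point, the Hopf lemma together with the condition $|\nabla V| = |\nabla \bar U_{\tau^*}| = 1$ and nondegeneracy force the two analytic free boundaries to coincide tangentially, after which Cauchy uniqueness for harmonic functions across the analytic hypersurface propagates the coincidence globally. Matching leading asymptotics at infinity then forces $\tau^* = t_0$.

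The main obstacle, to my mind, is the first step: obtaining the Liouville-type asymptotic expansion for an \emph{arbitrary} global minimizer $V \geq U_0$, rather than for the specific $\bar U$ constructed perturbatively. This requires quantitative closeness of $V$ to $U_0$ on all sufficiently large dyadic annuli (a rigidity statement for blow-down limits in the class $\{V \geq U_0\}$), together with a decomposition of $V - U_0$ into eigenmodes of the linearized operator in which any mode forbidden by the growth / positivity constraints on $V$ is ruled out. Once this expansion is in hand, the sliding argument described above is fairly standard given the foliation in (5) and the analytic regularity of $F(\bar U)$ furnished by (2).
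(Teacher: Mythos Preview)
Your first two steps---the asymptotic expansion for $V$ via Lemma~\ref{asy}, Lemma~\ref{linearized1}, and Proposition~\ref{vas}, and the calibration of $t_0$ by matching leading coefficients---coincide with the paper's approach, and you correctly identify the expansion for a general $V \geq U_0$ as the substantive input. The divergence is in how the comparison is closed. The paper does \emph{not} run a sliding argument with a contact point: instead, for each fixed $t > t_0$ it uses the asymptotic ordering $V - \bar U_t \geq c(t)\,V_{\gamma_-}$ in a far annulus $(B_{2R_t}\setminus B_{R_t})\cap\Omega_0$, upgrades this to the full annulus via Remark~\ref{rem} (which handles the thin region between $F(U_0)$ and the free boundaries using the $C^{2,\alpha}$ control from Proposition~\ref{nearby}), and then applies the comparison Proposition~\ref{comp1} in $B_{2R_t}$ to obtain $V \geq \bar U_t$ on $\R^n$. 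The reverse inequality for $t < t_0$ is symmetric, and $t \to t_0$ finishes. No touching point, no Hopf lemma, and no use of the foliation~(v) are required. The paper also treats separately the case where $V$ and $\bar U$ pick up different leading modes $V_{\gamma_\pm}$, in which the comparison holds for all $t$ and forces $V \equiv U_0$.

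Your sliding argument has two soft spots that the paper's route sidesteps. First, on the unbounded domain the assertion ``by minimality of $\tau^*$ the graphs must touch somewhere'' is not automatic: one must rule out touching at infinity, and this already requires the asymptotic ordering you established---at which point the paper's direct comparison for each $t$ is shorter than extracting a finite contact point. Your justification that the set $\{t:\bar U_t\geq V\}$ is nonempty via pointwise blow-up of $\bar U_t$ as $t\to 0^+$ is likewise insufficient on $\R^n$; the asymptotics are what actually control the tail. Second, the Hopf/Cauchy-uniqueness step at a free boundary contact point presumes regularity of $F(V)$ near that point, which has not been established for an arbitrary minimizer $V$; the paper never needs this because it compares via Proposition~\ref{comp1} (a statement about minimizers in a ball with ordering on an annular shell) rather than via a pointwise boundary argument.
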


In fact, the uniqueness property  $(vi)$ holds more generally for global critical points that satisfy a uniform non-degeneracy condition (see Section 2 for this notion). Furthermore the expansion in $(iii)$ implies the same expansion for the free boundaries $F(\bar U), F(\underline U)$, as graphs over the cone $F(U_0)$ in the outer normal direction $-\nu$ (see Remark \ref{expfb}). 

\subsection{Further background}

In the context of critical point,  Hauswirth, Helein, and Pacard  \cite{HHP} discovered an explicit  family of simply-connected planar regions (so-called exceptional domains)
$$\Omega_a := \{(x_1, x_2) \in \R^2 : |x_1/a| < \pi/2 + \cosh(x_2/a)\}, a > 0$$
whose boundary consists of two curves (hairpins), and a positive harmonic function $H_a(x) = aH_1(x/a)$ on $\Omega_a$ that satisfies the free boundary conditions $H_a = 0$ and $|\nabla H_a |=1$ on $\p \Omega_a$. Extending $H_a$ to be zero in the complement of $\Omega_a$, we have a non-trivial entire solution to \eqref{visc} (an explicit formula for $H_a$ is given using conformal mappings). The graphs of the functions $H_a$ give a foliation of space using dilates of an unstable critical point of the functional.  In \cite{JK}, Jerison and Kamburov characterized blow-up limits of classical solutions to \eqref{visc} in the disk, with simply-connected positive phase, as either (a) half-plane, (b) two-plane or (c) hairpin solutions. This relates to previous results of classifications of entire solutions with simply-connected positive phase due to Khavinson, Lundberg and Teodorescu \cite{KLT} and Traizet  \cite{T}. Traizet showed that classical entire solutions satisfying that no connected component of $F(u)$ is compact in the open disk must be one of the forms (a), (b), or (c). Khavinson et al. (2013) showed that the same conclusion is true under a natural, weak regularity assumption on the free boundary known as the Smirnov property.

\subsection{Organization of the paper}

The paper is organized as follows. In Section 2 we provide some preliminary properties of minimizers. The following section is devoted to the proof of Theorem \ref{main}. The core of the proof is the analysis of the asymptotic behaviour of a solution of a perturbation of the linearized problem \eqref{wintro}, using appropriate families of subsolutions and supersolutions. This analysis is carried on in Section 4. Finally, the Appendix contains a technical result (relying on the Hodograph transform) needed in the proof of the main theorem.

\smallskip

When we announced this result, M. Engelstein and coauthors informed us that they have 
obtained what appear to be essentially similar results to the ones in this article in a paper under preparation \cite{ESV}.  

\section{Preliminaries}

In this section we recall some basic properties of minimizers and we obtain a few preliminary results which we use in Section 3 towards the proof of Theorem \ref{main}. Throughout the paper, for a non-negative function $u$ on a domain $\Omega$, we use the notation 
$$ F(u):= \Omega \cap \p \{u>0\}.$$

We start by a quick recap of the classical theory for minimizers of \eqref{J}, which we will be using throughout the paper (see \cite{AC}). First of all, we say that $u \in H^1_{loc}(\Omega)$ minimizes $J$ in $\Omega$ if on any smooth compact set $\Omega' \subset \Omega,$
$$J(u) \leq J(u + v), \quad \forall v \in H_0^1(\Omega').$$

For a given boundary data $\varphi \geq 0$ with finite energy, there exists a non-negative minimizer $u \in H^1(\Omega)$ of $J$ such that $u-\varphi \in H_0^1(\Omega).$ Moreover, $u \in C^{0,1}(\Omega)$ and satisfies \eqref{visc} in the viscosity sense (see \cite{C2} for the definition of viscosity solution and a proof of this claim). Finally, $u$ satisfies a strong non-degeneracy property, that is for any $x_0 \in F(u)$, $\sup_{B_r(x_0)} u \geq c r$ for all balls $B_r(x_0) \subset \Omega$. Lipschitz continuity and non-degeneracy are the key to the following compactness property (see Lemma 4.7 and Lemma 5.4 in \cite{AC}).

\begin{lem}\label{compact} Let $\{u_k\} \in H^1_{loc}(\Omega)$  be a sequence of minimizers to $J$ in $\Omega$ with 
$u_k \to u$  uniformly on compact subsets, then \begin{itemize}
\item $\{u_k > 0\} \to \{u > 0\}$ 
and $F(u_k) \to F(u)$ locally in the Hausdorff distance;
\item $\chi_{\{u_k>0\}} \to \chi_{\{u>0\}}$ in $L^1_{loc}(\Omega)$; 
\item $\nabla u_k \to \nabla u$ a.e. in $\Omega.$
\end{itemize} Moreover, $u$ minimizes $J$ in $\Omega$.
\end{lem}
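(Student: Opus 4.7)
The plan is to extract the geometric convergence statements from the uniform Lipschitz bound and the two-sided non-degeneracy of minimizers, and then to deduce minimality of $u$ via lower semicontinuity combined with a cut-off competitor construction.

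The local uniform Lipschitz estimate for minimizers transfers to $u$. For Hausdorff convergence of free boundaries, if $x_k \in F(u_k) \cap K$ with $x_k \to x_\infty$, uniform convergence forces $u(x_\infty) = 0$, and the non-degeneracy inequality $\sup_{B_\rho(x_k)} u_k \geq c\rho$ passes to the limit as $\sup_{B_\rho(x_\infty)} u \geq c\rho$, so $x_\infty \in F(u)$. Conversely, for $x \in F(u)$ and small $\epsilon$, if $F(u_k)\cap B_\epsilon(x)=\emptyset$ along a subsequence then $u_k$ is either identically zero on $B_\epsilon(x)$ (which contradicts $x \in F(u)$ in the limit) or strictly positive and hence harmonic there; in the latter case the Harnack inequality gives $\sup_{B_{\epsilon/2}(x)} u_k \lesssim u_k(x) \to 0$, violating the non-degeneracy of $u$ at $x$. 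The complementary upper non-degeneracy statement (if $\|u_k\|_{L^\infty(B_r)} \leq cr$ then $u_k \equiv 0$ on $B_{r/2}$) likewise shows that at any interior point of $\{u=0\}$ one has $u_k \equiv 0$ in a neighborhood for large $k$. Consequently $\chi_{\{u_k>0\}} \to \chi_{\{u>0\}}$ pointwise off the Lebesgue-null set $F(u)$, and dominated convergence gives the $L^1_{loc}$-convergence.

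For almost-everywhere convergence of gradients, on $\{u>0\}$ both $u_k$ and $u$ are eventually harmonic, so uniform convergence upgrades to $C^\infty_{loc}$-convergence, while on the interior of $\{u=0\}$, $u_k \equiv 0$ eventually by the above. The uniform Lipschitz bound together with the a.e.\ convergence then upgrades to $\nabla u_k \to \nabla u$ in $L^p_{loc}$ for every $p<\infty$ via dominated convergence, and in particular $J_{\Omega'}(u_k) \to J_{\Omega'}(u)$ on any $\Omega'\subset\subset \Omega$.

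To see that $u$ is a minimizer, fix $\Omega'\subset\subset\Omega$ and $v\geq 0$ with $v - u \in H^1_0(\Omega')$. Pick $\Omega''\subset\subset\Omega'$ and a cut-off $\phi$ equal to $1$ on $\Omega''$ and vanishing on $\partial\Omega'$, and set $v_k := v + (u_k - u)(1-\phi)$. Then $v_k^+ \geq 0$ shares boundary values with $u_k$, so it is an admissible competitor and minimality of $u_k$ gives $J_{\Omega'}(u_k) \leq J_{\Omega'}(v_k^+)$. On $\Omega''$ one has $v_k = v$; on the annulus $\Omega'\setminus\Omega''$ the gradient correction is controlled by the uniform Lipschitz bound times $|\Omega'\setminus\Omega''|$ plus the vanishing term $\|u_k-u\|_{L^\infty}^2 \int|\nabla\phi|^2$, while the bulk term contributes at most $|\Omega'\setminus\Omega''|$. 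Hence $\limsup_k J_{\Omega'}(v_k^+) \leq J_{\Omega'}(v) + \omega(|\Omega'\setminus\Omega''|)$, and letting $\Omega''\nearrow\Omega'$ yields the desired bound $J_{\Omega'}(u) \leq J_{\Omega'}(v)$.

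The main obstacle is the minimality step: the bulk term of $J$ is not continuous under mere uniform convergence, since a perturbation like $v+(u_k-u)$ can acquire positive phase of vanishing amplitude where $v=0$, so one cannot compare $J$ of the perturbation to $J(v)$ globally. The cut-off construction above circumvents this by pinning $v_k$ equal to $v$ on an exhausting inner subdomain and confining all boundary corrections to a thin annulus, where both terms of $J$ can be made arbitrarily small.
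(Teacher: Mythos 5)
The paper does not actually prove this lemma; it is quoted from Alt--Caffarelli (Lemmas 4.7 and 5.4 of \cite{AC}), and your argument is essentially the standard proof: uniform Lipschitz bounds plus the two non-degeneracy statements give the Hausdorff convergence, harmonicity in the positivity set gives the gradient convergence, and a cut-off competitor confined to a thin annulus gives minimality of the limit. The one step you assert without justification is that $F(u)$ is Lebesgue-null, which your dominated-convergence argument for $\chi_{\{u_k>0\}}\to\chi_{\{u>0\}}$ in $L^1_{loc}$ genuinely needs: this is not automatic for a limit function, but it follows from what you have already established. Non-degeneracy passes to $u$ (take $x_k\in F(u_k)$ converging to $x\in F(u)$ and pass to the limit in $\sup_{B_\rho(x_k)}u_k\ge c\rho$), so together with the uniform Lipschitz bound every point of $F(u)$ has positive lower density for $\{u>0\}$, and the Lebesgue density theorem applied to the measurable set $\{u=0\}\supset F(u)$ gives $|F(u)|=0$. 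With that line added the proof closes. Two further small remarks: in the converse Hausdorff direction you do not need non-degeneracy of $u$ at $x\in F(u)$, only that $\sup_{B_{\epsilon/2}(x)}u>0$, which holds by definition of $F(u)$; and in the minimality step the annulus error also contains the cross term $2\int\nabla v\cdot\nabla\bigl[(u_k-u)(1-\phi)\bigr]$, which is controlled by Cauchy--Schwarz and still vanishes as $\Omega''\nearrow\Omega'$, so the conclusion $J_{\Omega'}(u)\le J_{\Omega'}(v)$ stands.
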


We also recall Weiss Monotonicity Formula \cite{W}.

\begin{thm} \label{MF} If $u$ is a minimizer to $J$ in $B_R,$ then $$\Phi_u(r) := r^{-n}J_{B_r}(u)- r^{-n-1}\int_{\p B_r} u^2, \quad 0<r \leq R,$$ is increasing in $r.$ Moreover $\Phi_u$ is constant if and only if $u$ is homogeneous of degree $1$.\end{thm}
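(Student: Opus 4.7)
The plan is to follow Weiss's classical approach: differentiate $\Phi_u(r)$ directly, isolate a ``boundary Euler defect'' square, and exploit minimality by comparing $u$ with the $1$-homogeneous extension of its trace $u|_{\partial B_r}$. Since $u$ is Lipschitz (by the Alt--Caffarelli regularity recalled above) and the map $r\mapsto|\{u>0\}\cap B_r|$ is absolutely continuous, $\Phi_u$ is absolutely continuous on $(0,R]$, so it suffices to prove $\Phi_u'(r)\geq 0$ at almost every $r$.

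First I would compute $\Phi_u'(r)$ term by term, using $\frac{d}{dr}\int_{B_r}f\,dx=\int_{\partial B_r}f\,d\sigma$, the identity $\frac{d}{dr}\int_{\partial B_r}u^2\,d\sigma=\frac{n-1}{r}\int_{\partial B_r}u^2\,d\sigma+2\int_{\partial B_r}u\,\partial_\nu u\,d\sigma$, and the polar decomposition $|\nabla u|^2=(\partial_\nu u)^2+|\nabla_T u|^2$ on $\partial B_r$. Regrouping the $\partial_\nu u$ contributions into a perfect square yields
\[
r^{n+1}\Phi_u'(r)=r\!\int_{\partial B_r}\!\Bigl(\partial_\nu u-\tfrac{u}{r}\Bigr)^{\!2}d\sigma+\Bigl[-nJ_{B_r}(u)+\tfrac{1}{r}\!\int_{\partial B_r}\!u^2\,d\sigma+r\!\int_{\partial B_r}\!\bigl(|\nabla_T u|^2+\chi_{\{u>0\}}\bigr)d\sigma\Bigr].
\]

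To handle the bracket I would introduce the $1$-homogeneous competitor $h_r(x):=(|x|/r)\,u(rx/|x|)$ on $B_r$, which coincides with $u$ on $\partial B_r$ and lies in $W^{1,\infty}(B_r)$ for a.e.\ $r$, and is therefore admissible. Writing $h_r(x)=|x|\,w(x/|x|)$ with $w(\theta)=u(r\theta)/r$, the pointwise identity $|\nabla h_r|^2=w^2+|\nabla_{S^{n-1}}w|^2$ for $1$-homogeneous functions, together with the radial integral $\int_0^r s^{n-1}ds=r^n/n$ and conversion of angular integrals back to integrals on $\partial B_r$, gives
\[
nJ_{B_r}(h_r)=\tfrac{1}{r}\int_{\partial B_r}u^2\,d\sigma+r\int_{\partial B_r}\bigl(|\nabla_T u|^2+\chi_{\{u>0\}}\bigr)d\sigma.
\]
Thus the bracket above equals $n\bigl(J_{B_r}(h_r)-J_{B_r}(u)\bigr)\geq 0$ by minimality of $u$, and combined with the non-negative squared boundary term this yields $\Phi_u'(r)\geq 0$.

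For the rigidity statement, if $\Phi_u$ is constant then both terms in the identity above vanish, forcing $\partial_\nu u=u/r$ on $\partial B_r$ for a.e.\ $r$. This is Euler's relation for $1$-homogeneity: along each ray $s\mapsto u(sx)/s$ has derivative $s^{-2}(s\partial_s u-u)=0$, so $u(tx)=t\,u(x)$ for every $t>0$. Conversely, if $u$ is $1$-homogeneous then the rescaling $u_r(x):=r^{-1}u(rx)$ coincides with $u$ for every $r$, so $\Phi_u(r)=J_{B_1}(u)-\int_{\partial B_1}u^2\,d\sigma$ is $r$-independent. The main technical subtlety is that $\chi_{\{u>0\}}$ cannot be differentiated pointwise against variations of $u$; this is sidestepped here because the characteristic function enters only through the volume $|\{u>0\}\cap B_r|$, whose a.e.\ derivative in $r$ is $\int_{\partial B_r}\chi_{\{u>0\}}d\sigma$, and through the fact that $\{h_r>0\}\cap B_r$ is exactly the open cone over $\{u>0\}\cap\partial B_r$, so both sides of the comparison $J_{B_r}(u)\leq J_{B_r}(h_r)$ are unambiguously defined.
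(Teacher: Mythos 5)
Your argument is correct: the differentiation identity, the comparison with the one-homogeneous extension $h_r$ of the boundary trace, and the rigidity step via Euler's relation all check out, and this is exactly the classical argument of Weiss, which the paper does not reprove but simply cites as \cite{W}. So your proposal matches the intended (referenced) proof in both substance and structure.
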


We now notice that,
$$J(\min\{u,w\}) + J(\max\{u,w\}) = J(u) + J(w),$$
from which we deduce that if $u,w$ minimize $J$ in $\Omega$ and $u\geq w$ on $\p B_1$, then $\min\{u,w\},\max\{u,w\}$ minimize $J$ in $\Omega$ as well (with boundary data $w$ and $u$ respectively.).
We can then provide a comparison principle.

\begin{prop}\label{comp1} 
Let $u,w$ minimize $J$ in $B_1$ and assume that for $0<\eps<1$, $$u \geq w \quad \text{on $B_1 \setminus B_{1-\eps}$}.$$ Then $u \geq w $ in $B_1$.
\end{prop}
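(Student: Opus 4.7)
By the lattice identity $J(\min(u,w)) + J(\max(u,w)) = J(u) + J(w)$ recorded immediately above the statement, both $v := \min(u,w)$ and $V := \max(u,w)$ are minimizers of $J$ in $B_1$. The annular hypothesis gives $v = w$ (and $V = u$) on $B_1 \setminus B_{1-\eps}$, so proving $u \geq w$ in $B_1$ is equivalent to showing $v \equiv w$ in $B_1$. Fix any $r \in (1-\eps, 1)$, so that $v$ and $w$ share the boundary data $w|_{\p B_r}$ and both minimize $J$ in $B_r$.

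Testing each of $v, w$ as a competitor for the other yields the energy equality $J_{B_r}(v) = J_{B_r}(w)$. Since $v \leq w$ and hence $\{v>0\} \subseteq \{w>0\}$, this rewrites as
\begin{equation*}
\int_{B_r}\bigl(|\nabla w|^2 - |\nabla v|^2\bigr)\,dx = -\bigl|\{w>0\}\setminus\{v>0\}\bigr|.
\end{equation*}
The plan is then to turn this into pointwise equality via the strong maximum principle applied to $\phi := w - v \geq 0$. On the open set $\{v > 0\}$, both $v$ and $w$ are positive and harmonic, so $\phi$ is harmonic there and vanishes on the annular piece of $\{v>0\}$ (since $v = w$ on $B_1 \setminus B_{1-\eps}$). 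Hence $\phi \equiv 0$ on every connected component of $\{v>0\}$ that reaches $B_1 \setminus B_{1-\eps}$; non-degeneracy of $v$ at $F(v)$ combined with the energy identity above rules out connected components of $\{v>0\}$ compactly contained in $B_{1-\eps}$, because each such component would force a free boundary piece of $v$ inside $B_{1-\eps}$ incompatible with the identity.

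The main obstacle is the remaining region $\{v=0\}\cap\{w>0\}\cap B_{1-\eps}$, where $v$ is identically zero but $w$ carries a positive harmonic lobe with its own free boundary $F(w)$. I would rule this out by a Hopf-type boundary-point argument applied to $\phi$ at regular points of $F(v) \cup F(w)$ inside $B_{1-\eps}$, using the viscosity free-boundary relations $|\nabla v| = |\nabla w| = 1$ on their respective free boundaries together with the energy identity: a positive-measure violation would place $F(w)$ strictly inside the zero set of $v$, producing a free-boundary configuration incompatible with the minimality of $w$ against the competitor $v$ (equivalently, the competitor obtained by swapping $v$ and $w$ on this bad region would strictly decrease $J$). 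Once this region is ruled out, $\{v>0\}=\{w>0\}$ and $\phi \equiv 0$ on $\{v>0\}$, so $v \equiv w$ in $B_r$, and combined with the annular equality this gives $v \equiv w$ in $B_1$, i.e., $u \geq w$.
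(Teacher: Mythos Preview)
Your proof has a genuine gap at the point you label the ``main obstacle.'' The Hopf-type and energy-swapping arguments you sketch for ruling out $\{v=0\}\cap\{w>0\}$ are not rigorous and, as stated, cannot work: since $v$ and $w$ are both minimizers with the same boundary data, $J_{B_r}(v)=J_{B_r}(w)$, so no local swap of $v$ and $w$ can \emph{strictly} decrease $J$; and a Hopf argument at free-boundary points would require regularity you have not established and is in any case aiming at the wrong conclusion. Your argument for ruling out compactly contained components of $\{v>0\}$ via non-degeneracy and the energy identity is also vague --- the correct one-line reason is simply the maximum principle: a positive harmonic function on a bounded domain with zero boundary values must vanish.

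The difficulty stems entirely from working on $\{v>0\}$ rather than $\{w>0\}$. The paper makes the opposite choice and the obstacle disappears: every connected component $D$ of $\{w>0\}$ must meet the annulus (otherwise $w$ is harmonic on $D$ with zero boundary data, hence $w\equiv 0$ there), and on each such $D$ one argues $v\equiv w$ directly, which finishes the proof since $0\le v\le w=0$ on $\{w=0\}$. If you prefer to salvage your own route, here is a clean completion: once you know $v=w$ on $\{v>0\}$ (all of whose components reach the annulus by the maximum principle, as above), continuity forces $w=0$ on $F(v)$. But every component of $\{w>0\}$ reaches the annulus and hence meets $\{v>0\}$; if it also met $\{v=0\}$ it would meet $F(v)\subset\{w=0\}$, a contradiction. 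Thus $\{w>0\}\subseteq\{v>0\}$, giving $\{w>0\}=\{v>0\}$ and $v\equiv w$.
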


\begin{proof} Since $u \geq w$ on $\p B_1$, we have that $v:=\min\{u,w\}$ minimizes $J$ among competitors with boundary value $w$. However, by our assumptions, $v \equiv w$ in $B_1 \setminus B_{1-\eps}$, hence by the harmonicity of minimizers in their positivity set, we conclude that $v \equiv w$ in $D \cap B_1$ for any connected component $D$ of $\{w>0\}$ which intersect the annulus. By the maximum principle, any connected component of $\{w>0\}$ will intersect the annulus,  hence $v\equiv w$ in $B_1.$ 
\end{proof}

\begin{prop}\label{comp2}
Let  $u,w$ minimize $J$ in $B_1$ and assume that $$u \geq w \quad \text{on $\p B_1$}, \quad u(x_0)> w(x_0) \quad \text{at $x_0 \in \p B_1$}.$$ If $\{w>0\} \cap \p B_1$ is connected, then $u \geq w $ in $B_1$.
%$$u \geq w \quad \text{in $B_1$}.$$
\end{prop}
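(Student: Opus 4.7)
The plan is to adapt the proof of Proposition \ref{comp1}, replacing the annular inequality hypothesis there with continuity plus the connectedness of $\{w > 0\} \cap \partial B_1$. Setting $v = \min\{u, w\}$, the observation preceding Proposition \ref{comp1} gives that $v$ is a minimizer of $J$ in $B_1$ with $v \equiv w$ on $\partial B_1$, and the desired conclusion is equivalent to $v \equiv w$ throughout $B_1$. Continuity of $u - w$ together with the strict inequality at $x_0$ yields an open neighborhood $N$ of $x_0$ in $\overline{B_1}$ on which $u > w$, so that $v \equiv w$ on $N \cap B_1$.

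I would treat first the main case $w(x_0) > 0$: shrink $N$ so that $w > 0$ on $N$, and let $D$ denote the connected component of $\{w > 0\} \cap B_1$ containing $N \cap B_1$. On $D$, $w$ is positive and harmonic, and on $N \cap B_1$ so is $v$. Unique continuation for harmonic functions applied to the pair $(v, w)$ on the connected component of $\{v > 0\} \cap D$ containing $N \cap B_1$ gives $v \equiv w$ there, and a short continuity argument (at any limit point $y \in D$ of this component one has $v(y) = w(y) > 0$, so $y$ belongs to the same component of $\{v > 0\} \cap D$) shows the component is relatively closed in $D$, hence equals $D$. Therefore $v \equiv w$ on $D$.

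I would then propagate $v \equiv w$ to all of $B_1$ exactly as in Proposition \ref{comp1}. Every connected component of $\{w > 0\} \cap B_1$ must reach $\partial B_1$ (else $w$ would vanish on a compactly contained component by the maximum principle), and in fact must touch $\partial B_1$ at some point of $\{w > 0\} \cap \partial B_1$ (otherwise continuity plus the maximum principle again force $w \equiv 0$ on that component). The connectedness of $\{w > 0\} \cap \partial B_1$ then forces $D$ to be the unique such component, so $\{w > 0\} \cap B_1 = D$; hence $v \equiv w$ on $\{w > 0\} \cap B_1$, while $v \equiv w \equiv 0$ on the complement, giving $v \equiv w$ in $B_1$.

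The main obstacle I foresee is the degenerate case $w(x_0) = 0$, where $N$ may fail to meet $\{w > 0\}$ and the unique continuation step above cannot start. I would dispatch this by locating a nearby point $x_1$ with $u(x_1) > w(x_1) > 0$ — available by continuity whenever $x_0$ lies in the closure of $\{w > 0\}$ — and running the above argument at $x_1$ in place of $x_0$. The residual sub-case in which $w$ vanishes identically on a full neighborhood of $x_0$ must be handled separately, most plausibly by a direct harmonic comparison on $D$ using only the boundary inequality $u \geq w$ on $\partial D$ together with the free boundary structure of minimizers.
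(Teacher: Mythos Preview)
Your approach is essentially the same as the paper's: set $v=\min\{u,w\}$, use continuity to get $v\equiv w$ on a neighborhood of $x_0$, propagate by harmonicity/unique continuation to the component of $\{w>0\}$ containing that neighborhood, and then invoke the maximum principle plus the connectedness of $\{w>0\}\cap\partial B_1$ to conclude. The paper's proof is two sentences and tacitly assumes $w(x_0)>0$ (the only case it ever uses, see the construction of $\bar U$); your more careful handling of the case $w(x_0)=0$ goes beyond what the paper writes, and the residual sub-case you leave open is left open there as well.
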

\begin{proof} We argue as in the proof of the lemma above, however in this case we only have that $\min\{u,w\} \equiv w$ in a neighborhood of $x_0.$ Thus they coincide in the connected component of $\{w>0\}$ which includes this neighborhood, and the conclusion follows from the connectivity assumption.
\end{proof}

We now obtain a uniqueness result. Recall that $\{U_0>0\}$ is connected \cite{CJK}.

\begin{lem}[Uniqueness]\label{uni} Let $w$ be a minimizer of \eqref{J} in $B_1$ such that $w=U_0$ on $\p B_1$. Then $w \equiv U_0$ in $B_1.$
\end{lem}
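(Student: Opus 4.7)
The plan is to reduce to the two one-sided comparisons $V\geq U_0$ or $V\leq U_0$ via the min/max trick, then glue to a global minimizer and exploit the connectedness of $\{U_0>0\}$. Since $U_0$ itself minimizes $J$ in $B_1$ with boundary value $U_0|_{\partial B_1}$, the identity $J(\min\{w,U_0\})+J(\max\{w,U_0\})=J(w)+J(U_0)=2J(U_0)$ together with the minimality of $U_0$ forces both $V_{+}:=\max\{w,U_0\}$ and $V_{-}:=\min\{w,U_0\}$ to minimize $J$ in $B_1$ with boundary data $U_0|_{\partial B_1}$. It therefore suffices to prove that any minimizer $V$ in $B_1$ with $V=U_0$ on $\partial B_1$ and $V\geq U_0$ (resp.\ $V\leq U_0$) in $B_1$ coincides with $U_0$; this yields $w\leq U_0\leq w$, hence $w\equiv U_0$.

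The main device is extension. Set $\tilde V:=V$ in $B_1$ and $\tilde V:=U_0$ in $B_1^c$. Because $J_{B_1}(V)=J_{B_1}(U_0)$, on any ball $B_{R'}\supset B_1$ we have $J_{B_{R'}}(\tilde V)=J_{B_{R'}}(U_0)$, and a standard cut-and-paste comparison against $U_0$ shows that $\tilde V$ itself is a global minimizer of $J$. By construction, $\tilde V\equiv U_0$ on the nonempty open set $\{U_0>0\}\setminus\overline{B_1}$, and by the result of \cite{CJK} recalled above, $\{U_0>0\}$ is a connected open cone.

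Consider first the case $V\geq U_0$. Then on $\{U_0>0\}$ we have $\tilde V\geq U_0>0$, so both functions are harmonic there and $\phi:=\tilde V-U_0\geq 0$ is a nonnegative harmonic function on the connected open set $\{U_0>0\}$ which vanishes on the nonempty open piece outside $\overline{B_1}$. Real-analyticity of harmonic functions then gives $\phi\equiv 0$ on $\{U_0>0\}$, so $\tilde V=U_0$ there and, by continuity, on $\overline{\{U_0>0\}}$. A connected component $D'$ of $\{\tilde V>0\}$ disjoint from $\{U_0>0\}$ would be a bounded open subset of $\{U_0=0\}^o\cap B_1$ with $\tilde V$ harmonic inside and $\tilde V=0$ on $\partial D'$, contradicting the maximum principle. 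Hence $\tilde V\equiv U_0$ globally.

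In the symmetric case $V\leq U_0$, let $D$ be the connected component of $\{\tilde V>0\}$ containing $\{U_0>0\}\setminus\overline{B_1}$. Since $\tilde V>0$ forces $U_0>0$, one has $D\subseteq\{U_0>0\}$; both functions are harmonic on $D$, and $U_0-\tilde V\geq 0$ vanishes on the exterior piece, so analytic continuation yields $\tilde V=U_0$ on $D$. A point of $\partial D\cap\{U_0>0\}$ would simultaneously satisfy $\tilde V=0$ (being on $\partial\{\tilde V>0\}$) and $\tilde V=U_0>0$ (by continuity from $D$); thus $D=\{U_0>0\}$ by connectedness of $\{U_0>0\}$, and then $V\leq U_0=0$ on $\{U_0=0\}^o\cap B_1$ forces $V=0$ there. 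The main technical point is verifying that the glued function $\tilde V$ is truly a global minimizer; once that is secured, the connectedness of $\{U_0>0\}$ makes analytic continuation a clean substitute for any delicate analysis of $\tilde V$ near $F(U_0)$.
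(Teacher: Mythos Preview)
Your proof is correct and shares the paper's core idea: extend the minimizer by $U_0$ outside $B_1$ to obtain a minimizer on a larger ball, then use unique continuation for harmonic functions together with the connectedness of $\{U_0>0\}$. Two differences are worth noting. First, you begin by reducing to the ordered cases $V\ge U_0$ and $V\le U_0$ via the min/max identity; the paper skips this and applies unique continuation directly to $W-U_0$ on the component of $\{W>0\}\cap\{U_0>0\}$ containing the annular piece, which is both open and closed in $\{U_0>0\}\cap B_2$ and hence equals it. Your ordering makes the continuation step marginally cleaner (since $\{U_0>0\}\subseteq\{\tilde V>0\}$ is immediate when $V\ge U_0$) at the cost of handling two separate cases. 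Second, to rule out $\tilde V>0$ inside $\{U_0=0\}^o$ you invoke the maximum principle on bounded components of $\{\tilde V>0\}$, whereas the paper uses a one-line energy comparison: once $w=U_0$ on $\overline{\{U_0>0\}}\cap B_1$, any additional positivity in $\{U_0=0\}^o\cap B_1$ strictly increases $J_{B_1}(w)$ above $J_{B_1}(U_0)$, contradicting minimality. Both routes are valid; the paper's is somewhat more economical. Your remark that ``the main technical point is verifying that $\tilde V$ is truly a global minimizer'' overstates the difficulty---this follows in one line from $J_{B_{R'}}(\tilde V)=J_{B_{R'}}(U_0)$ and the global minimality of $U_0$.
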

\begin{proof} Let $$W= w \quad \text{in $B_1$}, \quad W=U_0 \quad \text{in $B_2 \setminus B_1$}.$$ 
Since $w- U_0 \in H^1_0(B_1),$ we have $W - U_0 \in H^1_0(B_2)$ and $W$ and $U_0$ both minimize $J$ in $B_2$ 
among competitors with boundary data $U_0$. Hence,
$$\Delta W = 0 \quad  \hbox{in } B_2 \cap \{ W > 0\},  %\text{in $B_2^+(W),$} 
\qquad \Delta U_0 = 0 \quad \hbox{in } B_2 \cap \{ U_0 > 0\}. %  \text{in $B_2^+(U_0).$}
$$
In particular, since $W=U_0$ in $B_2\setminus B_1$, by unique continuation $w$ and $U_0$ must agree on $B_1 \cap \overline{ \{U_0>0\}}.$ If $w(\bar x)>0$ at a point $\bar x \in B_1 \cap \{U_0 =0\}^o$, then $J_{B_1}(w) > J_{B_1}(U_0)$ in $B_1$, contradicting minimality. 
\end{proof}

Finally, we prove a  strict separation lemma.

\begin{lem}\label{separation} Let $w$ be a minimizer to \eqref{J} in $B_1$ such that $w \geq U_0$ on $\p B_1$. Then $w \geq U_0$ in $B_1$. Moreover, $w > U_0$  in $B_1 \cap \overline{\{U_0>0\}}$, 
 unless $w \equiv U_0$ in $B_1 \cap \overline{\{U_0>0\}}$.
\end{lem}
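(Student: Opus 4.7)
The idea is to reduce to a strong maximum principle on the connected open set $B_1\cap\{U_0>0\}$, and then to upgrade strict inequality up to the smooth part of $F(U_0)$ by a Hopf-type argument played against the viscosity free boundary condition for $w$.

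First, to get $w\geq U_0$ in $B_1$, I would reuse the minimization trick of Propositions \ref{comp1}--\ref{comp2} and Lemma \ref{uni}: since $w\geq U_0$ on $\partial B_1$ and both are minimizers of $J$ in $B_1$, the function $\min\{w,U_0\}$ is a minimizer in $B_1$ with boundary value $U_0$, and by the uniqueness Lemma \ref{uni} it must coincide with $U_0$ inside $B_1$. This gives $w\geq U_0$ throughout $B_1$.

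Next I would set up the dichotomy on $B_1\cap\overline{\{U_0>0\}}$. The set $\{U_0>0\}$ is a connected cone with apex at $0$, so $\{U_0>0\}\cap B_1$ is connected. On this open set both $w$ and $U_0$ are strictly positive and harmonic, hence $w-U_0\geq 0$ is harmonic there. By the strong maximum principle, either $w\equiv U_0$ on all of $\{U_0>0\}\cap B_1$, in which case continuity propagates the equality to $B_1\cap\overline{\{U_0>0\}}$, or $w>U_0$ everywhere in $\{U_0>0\}\cap B_1$.

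It then remains to rule out $w(x_0)=0$ at $x_0\in F(U_0)\cap B_1$ in the strict case. At a regular point $x_0\in F(U_0)\cap B_1\setminus\{0\}$ (analytic by the standing assumption on $U_0$), one can apply Hopf's lemma to the non-negative non-trivial harmonic function $w-U_0$ on the one-sided analytic domain $\{U_0>0\}\cap B_\rho(x_0)$, obtaining $\p_\nu(w-U_0)(x_0)>0$, where $\nu$ is the interior normal to $\{U_0>0\}$. Since $|\nabla U_0(x_0)|=1$, this yields $\p_\nu w(x_0)>1$. I would then produce a contradiction with the viscosity free boundary condition for $w$ by using the smooth test function $\phi(x)=(1+\eps)\,\psi(x)$, with $\psi$ the signed distance to $F(U_0)$ near $x_0$. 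A boundary Harnack comparison between $w$ and $U_0$, combined with the Taylor expansion $U_0=\psi+O(\psi^2)$, gives $\phi^+\leq w$ in a neighborhood of $x_0$, while $|\nabla\phi(x_0)|=1+\eps>1$, which is incompatible with $w$ being a viscosity supersolution at $x_0\in F(w)$.

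I expect this last step to be the main technical obstacle, as it hinges on making the barrier inequality $\phi^+\leq w$ precise, which requires either boundary Harnack for $w$ and $U_0$ at the analytic free boundary or a careful quantitative use of the Hopf rate. The singular point $0\in F(U_0)$, if it lies in $B_1$, is not covered by the Hopf argument and requires a separate treatment, e.g.\ a blow-up using the Weiss monotonicity formula (Theorem \ref{MF}) to reduce to a comparison between two $1$-homogeneous global minimizers, one of which is $U_0$.
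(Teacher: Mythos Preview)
Your first two steps---using $\min\{w,U_0\}$ together with the uniqueness Lemma \ref{uni} to get $w\geq U_0$, and then the strong maximum principle on the connected open set $\{U_0>0\}\cap B_1$---coincide with the paper's argument. Your Hopf/viscosity treatment at regular points of $F(U_0)$ is also essentially the paper's; the paper phrases it tersely as ``$\partial_\nu U_0(x_0)=\partial_\nu w(x_0)=1$ contradicts Hopf'', while you spell out the viscosity touching. Your barrier $\phi^+=(1+\eps)\psi^+$ does work: Hopf gives the linear rate $w-U_0\geq c\,\mathrm{dist}(\cdot,F(U_0))$ on the $\{U_0>0\}$ side near $x_0$, and $U_0=\psi+O(\psi^2)$, so $w\geq(1+c/2)\psi^+$ in a small ball, and no boundary Harnack is needed.

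The genuine gap is at the singular point $0$. Your blow-up suggestion does not close the argument: if $w(0)=0$, the blow-up of $w$ at $0$ is a $1$-homogeneous global minimizer $\geq U_0$, and the eigenvalue/domain-monotonicity argument indeed forces it to equal $U_0$---but this conclusion is perfectly consistent with $w(0)=0$ and yields no contradiction. The paper handles $0$ by a different, elementary device. Once the regular-point step gives $w>U_0$ on the compact set $(\bar B_{3/4}\setminus B_{1/2})\cap\overline{\{U_0>0\}}$, Lipschitz continuity of $U_0$ yields
\[
w(x)\geq U_0(x+y)\quad\text{for all }|y|\leq\delta,\ x\in \bar B_{3/4}\setminus B_{1/2}.
\]
Since each translate $U_0(\cdot+y)$ is itself a minimizer, Proposition \ref{comp1} extends this inequality to all of $B_{3/4}$. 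Choosing $y$ with $U_0(y)>0$ then gives $w(0)\geq U_0(y)>0$, which is exactly the missing strict inequality at the origin.
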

\begin{proof} Since $w \geq U_0$ on $\p B_1$, $\min\{w, U_0\}$ is also a minimizer to $J$ in $B_1$ with boundary data $U_0.$ By the previous uniqueness result, we deduce that $\min\{w,U_0\} \equiv U_0$ in $B_1$ and the first part of our lemma is proved. Assume $w \not \equiv U_0$ in $B_1 \cap \overline{\{U_0>0\}}.$ By the maximum principle, we immediately deduce that $w>U_0$ in $ B_2 \cap \{ U_0> 0\}$ % $B_1^+(U_0)$. 
Moreover, if $w$ touches $U_0$ by above at a free boundary point $x_0 \neq 0$, since $\p_\nu U_0 (x_0)= \p_\nu w(x_0)=1$, we contradict Hopf's boundary point lemma. In particular,
$$w > U_0 \quad \text{in $(\bar B_{3/4} \setminus B_{1/2})\cap \overline{\{U_0>0\}},$}$$ implying that for $\delta$ small,
  $$w(x) \geq U_0(x+y), \quad |y|\leq \delta, \quad \text{in $\bar B_{3/4} \setminus B_{1/2}$},$$ and 
  by Proposition \ref{comp1} this is true in $B_{3/4}$. 
  Choosing $y$ so that $U_0(y)>0$, we get that $w(0)>0$, concluding the proof of the desired claim.
 \end{proof}

\section{The proof of Theorem \ref{main}}

This section is devoted to the proof of our main Theorem \ref{main}. The first subsection deals with the existence, while the second one handles the regularity.

\subsection{Existence and basic properties.} In this subsection we prove part $(i)$ of Theorem \ref{main}, that is the existence of $\bar U, \underline U$, and provide some key technical lemmas about their vertical distance from $U_0$. We start with the existence.

\medskip

\textit{Proof of Theorem $\ref{main}$-$(i)$.} First, we construct the global minimizer $\bar U$.
Let $x_0 \in \p B_1 \cap \{U_0>0\}$ and let $g \geq 0$ be a smooth function compactly supported in a neighborhood of $x_0$ on $\p B_1.$
 For $\epsilon >0,$ we define  the boundary value 
 $g_\epsilon := U_0 + \epsilon g \geq U_0$ on $\partial B_1$, $g_\eps(x_0)> U_0(x_0),$ and call $u_\eps$ a minimizer to $J_{B_1}$ with this boundary value. Notice that since $g_\eps$ is H\"older continuous, $u_\eps$ is uniformly H\"older continuous up to the boundary and it achieves the boundary data continuously. By Lemma \ref{separation}, $0 \not \in F(u_\eps)$ and $u_\eps \geq U_0$ for all $\eps>0.$
Let $\epsilon_j \searrow 0$ (as $j  \to\infty$), and for each $j$, let $B_{r_j}  \subset \{u_{\epsilon_j} > 0 \}$ be the largest ball included in the positive phase of $u_{\eps_j}$. Notice that by Proposition \ref{comp2}, $u_{\eps_j}$ is a decreasing sequence. By the Lipschitz continuity and the compactness Lemma \ref{compact}, we conclude that up to extracting a subsequence, $u_{\eps_j}$ converges to a minimizer $v$ of $J$ in $B_1$, and since $g_{\eps_j} \to U_0$, we conclude by the uniqueness Lemma \ref{uni} that $v=U_0$ and in particular $r_j \to 0$ and $j \to \infty$.  
 Consider now for each $j$ the rescaling $\bar U_j (x) = u_{\epsilon_j}(r_j x) / r_j$. Then $\bar U_j$ is a  minimizer  in $B_{1/r_j}$, $B_1 \subset \{ \bar U_j >0\}$, and $\partial B_1 \cap F(\bar U_j) \not = \emptyset$.  Moreover, $\bar U_j (x) \leq C(1 + |x|)$  in $B_{\frac{1}{2r_j}} $, where $C$ is independent of $j$, since each $u_{\eps_j}$ is uniformly Lipschitz in $B_{1/2}$. Again by compactness, up to extracting a subsequence, $\bar U_j \to \bar U$ and $\bar U$ is a global minimizer with $B_1 \subset \{\bar U > 0\},$ and universal Lipschitz bound.
 Finally by construction $\bar U \geq U_0$ as desired.

Similarly, $\underline  U$ can be build starting from a family of minimizers with boundary data,  $0\leq g_\epsilon := U_0 - \epsilon g \leq U_0$. \qed

 \medskip

 Next, we prove the following asymptotic behavior for $\bar U, \underline U.$
   
\begin{lem}\label{asy}$\bar U,\underline U$ are asymptotic to $U_0$ at infinity, i.e.,  $$\lim_{t \to \infty} \bar U_t(x) = \lim_{t \to \infty} \underline U_t(x) =U_0(x),$$ uniformly on compact sets in $\R^n$.\end{lem}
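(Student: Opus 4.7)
The plan is to show that every subsequential blow-down limit of $\bar U$ and of $\underline U$ must equal $U_0$; together with precompactness this gives the full convergence. Fix a sequence $t_k \to \infty$. The rescalings $\bar U_{t_k}(x) = \bar U(t_k x)/t_k$ inherit the universal Lipschitz bound on $\bar U$, so by Arzel\`a--Ascoli and Lemma \ref{compact} a further subsequence converges locally uniformly to a global minimizer $W$. Weiss Monotonicity (Theorem \ref{MF}) applied to $\bar U$, together with the scaling identity $\Phi_{\bar U_t}(r) = \Phi_{\bar U}(tr)$ and the uniform boundedness of $\Phi_{\bar U}$ (which follows from the Lipschitz bound), gives $\Phi_W(r) \equiv c$ for a constant $c \in \R$, and hence $W$ is $1$-homogeneous by Theorem \ref{MF}. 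Rescaling the construction inequality $\bar U \geq U_0$ and using the $1$-homogeneity of $U_0$ yields $\bar U_t \geq U_0$ for all $t>0$, so $W \geq U_0$ in the limit; analogously $W \leq U_0$ in the $\underline U$ case.

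The rest is an eigenvalue rigidity argument on the sphere. Writing any $1$-homogeneous global minimizer as $V(x) = |x| f(x/|x|)$ with $f > 0$ on $\omega_V := \{V>0\} \cap \mathbb S^{n-1}$, harmonicity of $V$ in $\{V > 0\}$ is equivalent to $-\Delta_{\mathbb S^{n-1}} f = (n-1) f$ on each connected component of $\omega_V$, so each component has first Dirichlet eigenvalue exactly $n-1$, and the free boundary condition $|\nabla V|=1$ translates to the normalization $|\partial_\nu f|=1$ on $\partial \omega_V$. Integration by parts also produces the identity $\Phi_V(1) = |\omega_V|/n$. In particular $\lambda_1(\omega_0) = n-1$, where $\omega_0 := \{U_0>0\} \cap \mathbb S^{n-1}$ is connected (since $\{U_0>0\}$ is connected by \cite{CJK}). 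For the $\underline U$ case the ordering $W \leq U_0$ gives $\omega_W \subseteq \omega_0$; each connected component of $\omega_W$ is then a connected open subset of $\omega_0$ with first Dirichlet eigenvalue $n-1 = \lambda_1(\omega_0)$, and strict monotonicity of the first Dirichlet eigenvalue under strict open inclusion forces each such component to equal $\omega_0$, so (components being disjoint) $\omega_W = \omega_0$. The eigenfunction equation together with the normalization $|\partial_\nu f|=1$ then determines $f$ uniquely, giving $W \equiv U_0$.

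For $\bar U$ the same eigenvalue argument applied to the component of $\omega_W$ containing $\omega_0$ shows this component equals $\omega_0$, but extra components of $\omega_W$ lying in $\mathbb S^{n-1} \setminus \overline{\omega_0}$ are not ruled out by eigenvalue considerations alone; this is the main obstacle. I would eliminate them by extracting a Weiss-energy upper bound from the construction of $\bar U$: by Weiss monotonicity for the $B_1$-minimizers $u_{\eps_j}$, for $j$ large with $r_j R < 1/2$,
$$\Phi_{\bar U_j}(R) = \Phi_{u_{\eps_j}}(r_j R) \leq \Phi_{u_{\eps_j}}(1/2) \longrightarrow \Phi_{U_0}(1/2) = c_0,$$
the last convergence following from $u_{\eps_j} \to U_0$ in $B_1$ (established in the existence proof via Lemma \ref{uni}) together with dominated convergence for the Dirichlet integrals (using the uniform Lipschitz bound and the a.e.\ convergence of gradients in Lemma \ref{compact}), the $L^1$-convergence of the positivity sets from Lemma \ref{compact}, and uniform convergence on $\partial B_{1/2}$. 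Sending $j \to \infty$ and then $R \to \infty$ gives $c \leq c_0$, i.e.\ $|\omega_W| \leq |\omega_0|$. Combined with $\omega_W \supseteq \omega_0$ and the openness of both sets this forces $\omega_W = \omega_0$, and hence $W \equiv U_0$ as before. Since every subsequential blow-down limit equals $U_0$, the full limits $\bar U_t, \underline U_t \to U_0$ follow uniformly on compact sets.
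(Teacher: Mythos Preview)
Your argument is correct. For $\underline U$ it coincides with the paper's: blow down, use Weiss to get homogeneity, then eigenvalue rigidity on the sphere with the inclusion $\omega_W\subseteq\omega_0$. For $\bar U$ you diverge from the paper. The paper's proof does not single out the $\bar U$ case; it simply says that since $\{U_0>0\}\subset\{V>0\}$ and eigenvalues are monotone under domain inclusion, ``standard arguments'' give $V\equiv U_0$. The implicit ingredient making this work is that the positivity set of \emph{any} global minimizer is connected (the paper recalls this from \cite{CJK} just before Lemma~\ref{uni}); once $\omega_V$ is known to be connected, it is the single component containing $\omega_0$ and the eigenvalue comparison $\lambda_1(\omega_V)=n-1=\lambda_1(\omega_0)$ forces $\omega_V=\omega_0$ directly, with no extra components to worry about. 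Your route avoids invoking that connectedness result: you go back to the construction and propagate the Weiss energy bound $\Phi_{\bar U_j}(R)=\Phi_{u_{\eps_j}}(r_jR)\le\Phi_{u_{\eps_j}}(1/2)\to\Phi_{U_0}(1/2)$ through the two limits to obtain $|\omega_W|\le|\omega_0|$, which together with $\omega_W\supseteq\omega_0$ gives equality. This is a nice, self-contained alternative; it trades a structural fact about minimizers for an energy bookkeeping argument that uses only what is already on the page (Lemma~\ref{compact}, Theorem~\ref{MF}, and the convergence $u_{\eps_j}\to U_0$ from the existence proof). The paper's version is shorter, but yours makes transparent exactly where the obstruction lies and how the construction itself carries the information needed to remove it.
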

\begin{proof} We prove the statement for $\bar U$. The same argument also applies to $\underline U$. For $t_k \to \infty$ as $k \to \infty$, consider the sequence of rescalings, $\bar U_{t_k}(x): = \frac{\bar U(t_kx)}{t_k}$ which in view of the equi-Lipschitz continuity and the compactness Lemma \ref{compact}, will converge to a global minimizer $V \geq U_0$ (up to extracting a subsequence). On the other hand, again by the Lipschitz continuity,
 $$\Phi_V(r)=\lim_{k \to \infty} \Phi_{\bar U_{t_k}}(r)= \lim_{k \to \infty}\Phi_{\bar U}(t_kr)= a \in \R, \quad \forall r>0,$$
 and by Theorem \ref{MF}, $V$ is homogeneous of degree 1.
 
Thus, the restrictions of $V, U_0$ to the unit sphere solve the eigenvalue problem, 
$$\Delta_{\mathbb S^{n-1}} u=- \lambda_1 u \quad \text{on $\{u>0\} \cap \mathbb S^{n-1}$},$$ for $\lambda_1= n-1$. However, since $\{U_0>0\} \subset \{V>0\}$ and eigenvalues are monotone with respect to the inclusion of domains, we deduce from standard arguments that $U_0 \equiv V.$
\end{proof}

Finally, using Lemma \ref{asy} and Proposition \ref{nearby} in the Appendix, we can deduce the following result, which is key to our strategy. Here and henceforth, we denote by $E^c:= \R^n \setminus \bar E.$ Also, constants depending on $n, U_0$ will be called universal and may vary from one appearance to another in the body of the proofs. For notational simplicity, the positivity set of $U_0$ is called $\Omega_0:=\{U_0>0\}$, and as pointed out in the introduction,  $-(\p_{\nu\nu}U_0)=H>0$ is the mean curvature of $\p \Omega_0$ oriented towards the complement of the connected set $\Omega_0.$ 

To fix ideas, from now on, we prove the statements involving $\bar U$.

\begin{lem}\label{linearized1}There exists universally large $R>0$, such that $v:= \bar U -U_0 > 0$ satisfies
\begin{equation}\label{veq}\begin{cases}
\Delta v=0 \quad \text{in $\Omega_0 \cap B^c_{R}$},\\
\p_\nu v +H(1+ O(\frac{v}{|x|}))v=0 \quad \text{on $\p \Omega_0 \cap B^c_{R}$},\\
\end{cases}\end{equation} with $\nu$ the interior unit normal to $\p \Omega_0$.
Moreover, 
$$O\left(\frac{v}{|x|}\right)= o(1), \quad \text{as $|x| \to \infty$}.$$
\end{lem}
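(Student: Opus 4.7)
The plan is to split the claim into three independent pieces: interior harmonicity of $v$, the decay $v/|x| = o(1)$, and the nonlinear Robin condition on $F(U_0)$.

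First, the two easy parts. By the strict separation Lemma \ref{separation}, $v > 0$ in $\ov{\Om_0}\setminus\{0\}$, so on $\Om_0$ both $\bar U = U_0 + v$ and $U_0$ are strictly positive and hence harmonic as minimizers in their positivity sets; subtracting gives $\Delta v = 0$ in $\Om_0 \cap B_R^c$. For the decay, $1$-homogeneity of $U_0$ yields
\[
\frac{v(x)}{|x|} \;=\; \bar U_{|x|}(x/|x|) - U_0(x/|x|),
\]
and the right side tends to $0$ uniformly on $\mathbb S^{n-1}$ by Lemma \ref{asy}.

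The substance of the lemma is the Robin condition on $F(U_0)\cap B_R^c$. By Lemma \ref{asy}, the rescaled functions $\bar U_t$ with $t\geq R$ are uniformly close to $U_0$ for $R$ universally large, so Proposition \ref{nearby} of the Appendix (a hodograph transform argument) lets me write $F(\bar U)\cap B_R^c$ as an analytic normal graph over $F(U_0)$ in direction $-\nu$: for each $x_0\in F(U_0)\cap B_R^c$ there is a unique $\phi(x_0)\geq 0$ with $y_0 := x_0 - \phi(x_0)\nu(x_0)\in F(\bar U)$, subject to scale-invariant $C^{2,\alpha}$ bounds of the form $|\phi| + |x_0||\nabla\phi| + |x_0|^2|D^2\phi|\lesssim v(x_0)$. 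Using $\bar U(y_0)=0$, $|\nabla\bar U(y_0)|=1$, and the identity $\p^2_{\nu'\nu'}\bar U(y_0) = -H'(y_0)$ forced by $\Delta\bar U = 0$ (here $\nu'$, $H'$ denote the inward normal and mean curvature of $F(\bar U)$ at $y_0$), I Taylor expand $\bar U$ along the normal line from $y_0$. Comparing $\nu\leftrightarrow\nu'$ with error $O(|\nabla\phi|)$ and $H'\leftrightarrow H$ with error $O(\phi/|x_0|^2)$ from the scale-invariant Lipschitz bound on $H$, this gives
\[
\bar U(x_0) = \phi - \tfrac{1}{2} H(x_0)\phi^2 + O(\phi^3/|x_0|^2), \qquad \p_\nu \bar U(x_0) = 1 - H(x_0)\phi + O(\phi^2/|x_0|^2).
\]
Inverting the first relation as $\phi = v(x_0)(1 + O(v/|x_0|))$ and using $U_0 = 0$, $\p_\nu U_0 = 1$ on $F(U_0)$ produces $\p_\nu v + Hv = O(v^2/|x|^2)$, which—since $H\sim 1/|x|$—is precisely the assertion $\p_\nu v + H(1 + O(v/|x|))v = 0$.

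The main obstacle is the normal-graph representation of $F(\bar U)$ over $F(U_0)$ with scale-invariant $C^{2,\alpha}$ control of $\phi$ in terms of $v$. Without such control neither the Taylor expansion at the boundary nor the curvature comparison $|H'-H|\lesssim \phi/|x|^2$ is available. This is exactly what the hodograph transform in Proposition \ref{nearby} is designed to deliver: it flattens $F(\bar U)$ onto $F(U_0)$ and turns the free boundary condition into a fully nonlinear elliptic equation for $\phi$ on the fixed surface $F(U_0)$, whence the needed Schauder estimates follow from the smallness of $v$ guaranteed by Lemma \ref{asy}.
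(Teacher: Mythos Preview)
Your proof is correct and follows essentially the same approach as the paper: both rely on Proposition~\ref{nearby} (the hodograph estimate) to obtain $C^{2,\alpha}$ control of $v$ in terms of its $L^\infty$ norm, and then Taylor-expand the free boundary condition $|\nabla\bar U|=1$ to read off the Robin relation for $v$ on $F(U_0)$. The only differences are presentational: the paper rescales to a fixed annulus $B_2\setminus\bar B_1$ (so that Proposition~\ref{nearby} applies verbatim), derives $\p_\nu v_s + H v_s = O(|v_s|^2)$ there by expanding $|\nabla\bar U_s(x-\psi_s\nu)|^2=1$, and then rescales back, whereas you work directly on the original scale with scale-invariant bounds on $\phi$ and expand $\bar U$ and $\p_\nu\bar U$ from the point $y_0\in F(\bar U)$ using $\p^2_{\nu'\nu'}\bar U=-H'$; both computations are equivalent.
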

\begin{proof}
Let $\bar U_s(x):=s^{-1}\bar U(sx), s\geq 1$. We prove that  $v_s:= \bar U_s- U_0$
satisfies 
\begin{equation}\label{le}\begin{cases}
\Delta v_s=0 \quad \text{in } \Omega_0 \cap A,\\
\p_\nu v_s + H  v_s = O(|v_s|^2) \quad \text{on $\p \Omega_0 \cap A$},\\
\end{cases}\end{equation}
with $\nu$ the interior unit normal to $\p \Omega_0$, the constant in $O(|v_{s}|^2)$ universal, and $A$ an open annulus included in $B_2 \setminus \bar B_1$. Below $A_l:= B_{2-l} \setminus \bar B_{1+l}$, $l<1$.

Since $\bar U_s, U_0$ are harmonic in their positivity set, and $\bar U_s \geq U_0$, the harmonicity of $v_s$ in $\Omega_0$ is obvious. In order to prove the boundary condition we proceed as follows. First, in view of Lemma \ref{asy} 
\begin{equation*}\|\bar U_s - U_0\|_{L^\infty(B_{2} \setminus B_1)} \to 0, \quad \text{as $s\to \infty$},\end{equation*} thus, by the flatness implies regularity results \cite{C2}, there exists $\eps_0$ universal such that if $s=s(\eps_0)$ is large enough so that
\begin{equation}\label{under}\|\bar U_s - U_0\|_{L^\infty(B_{2} \setminus B_1)} \leq \epsilon_0\end{equation}
then in $A_{1/5} \cap \{\bar U_s> 0\}$,  %$A_{1/5}^+(\bar U_s)$,  
$\|\bar U_s\|_{C^3} \leq C$ with $C>0$ universal. Moreover, in a sufficiently small ball $B_\rho(x), x \in F(U_0)\cap A_{1/5}$, $\bar U_s$ and $U_0$ satisfy Proposition \ref{nearby} (in view of the convergence of $\bar U_s$ to $U_0$ and non-degeneracy), hence by a covering argument we obtain that in $A_{2/5} \cap \bar \Omega_0,$ 
\begin{equation}\label{hodo}
\|\bar U_s- U_0\|_{2,\alpha} \leq C \|\bar U_s - U_0\|_\infty, 
\quad \bar U_s - U_0 \sim \eps:=\|\bar U_s - U_0\|_\infty, 
\end{equation}
 $s$ large, and $\eps=\eps(s) \ll \eps_0$.

Let us define the function $\psi_s >  0$, such that 
\begin{equation}\label{psi}x \in F(U_0) \cap A_{2/5} \longrightarrow x-\psi_s(x) \nu_x \in F(\bar U_s).\end{equation} In view of \eqref{hodo}, since $\p_\nu U_0(x)= 1$ on $F(U_0)$, we have that $\p_\nu \bar U_s(x) =1 + O(\eps)$ on $F(U_0) \cap A_{2/5}$. Hence, $ F(\bar U_s)$ is a graph over $F( U_0)$ locally in $A_{2/5}$, $\psi_s$ is well defined, it is bounded by $C\eps$, 
and in fact \begin{equation}\label{expexp}\psi_s(x) = v_s(x) + O(\eps^2).\end{equation}
Then, 
\begin{align*} \nabla \bar U_s (x-\psi_s(x)\nu_x) &= \nabla \bar U_s(x)
 -v_s(x)D^2 \bar U_s(x) \nu_x + O(\eps^2)\\   
&= \nu_x -v_s(x)D^2 \bar U_s(x) \nu_x + \nabla v_s(x) +O(\eps^2). \end{align*}
Hence, again using \eqref{hodo}, and the free boundary condition for $\bar U_s$, we get 
$$1=| \nabla \bar U_s(x-\psi_s\nu_x)|^2=1+2  (\p_\nu v_s -v_s \, (\p_{\nu\nu}U_0)) +O(\eps^2),$$ 
which gives the second condition in \eqref{le}.
Rescaling back we get  that $v$ satisfies \eqref{veq}, as desired.
Moreover, by Lemma \ref{asy},
$$O\left(\frac{v}{|x|}\right)= o(1), \quad \text{as $|x| \to \infty$}.$$\end{proof}

Lemma \ref{linearized1} remains valid if $v:= U_0 - \underline U$, after extending $\underline U$ analytically in  $\Omega_0 \cap B_R^c$ (for $R$ large enough), and moreover $v > 0$ in this region. Indeed, in the proof above, \eqref{under} holds for $\underline U_s$. Thus $\underline U_s$ can be extended analytically in $\Omega_0 \cap (B_2 \setminus \bar B_{1})$, and all the  estimates following \eqref{under} remain valid in $\Omega_0$.

\begin{rem}\label{expfb}Notice that, from \eqref{expexp}, we deduce the expansion ($R$ large)
$$\psi(x) = v(x)+ O\left(\frac{v^2}{|x|}\right), \quad x\in F(U_0) \cap B_R^c.$$
\end{rem}

While $v$ solves the perturbed linearized equation above in a subset of $\Omega_0$, at times our analysis leads to estimates that must be extended outside of $\Omega_0$. For that purpose we use the following remark. Here $A_l$ is the annulus defined in the previous Lemma.

  \begin{rem}\label{rem} If $u_1, u_2$ are critical points to $J$ in $A_0$, with $u_i \geq U_0$ and $$\|u_i - U_0\|_{L^\infty} \leq \eps, \quad u_1 \geq u_2+c\eps \quad \text{in $\bar\Omega_0 \cap A_0$},$$ for some $c>0$ and $\eps>0$ small depending on $c$, then $$u_1\geq u_2 \quad \text{in $A_{1/5}.$}$$ Indeed, if we argue as for $\bar U_s$ above, we obtain that if $\psi_i$ is the associated function defining the free boundary of $u_i$ as in \eqref{psi}, then for $x \in A_{1/5} \cap F(U_0)$, 
  $$u_i(x-t\nu)= u_i(x) - t + O(\eps^2), \quad t\in [0, \psi_i(x)].$$
  Hence, for all $t$'s for which this expansion holds and $\eps$ small,
  $$u_2(x-t\nu) \leq u_1(x) - c \eps - t + O(\eps^2) \leq u_1(x - t\nu) - c \eps + O(\eps^2) < u_1(x -t\nu),$$
which gives the desired result.
  
  \end{rem}

 \subsection{Regularity.} This subsection is devoted to the proof of parts $(ii)-(vi)$ of Theorem \ref{main}.  It relies on the following asymptotic expansion, which will be derived in the next section. Here we use $V_{\gamma_\pm}$, which have been defined in the introduction (see \eqref{defv},\eqref{defv2}). 
  
 \begin{prop}\label{vas} Let $v>0$ be a classical solution to \eqref{veq}. If $\gamma_- \neq \gamma_+$, there exist $\alpha'>0$ small universal and $R_0>0$ large universal, such that in $B_{R_0}^c \cap \Omega_0$, either \begin{equation}\label{1}v(x)=a_- V_{\gamma_-}(x)+ O(|x|^{-\gamma_- - \alpha'}), \quad a_->0,\end{equation} or 
\begin{equation}\label{2}v(x)=a_+ V_{\gamma_+}(x)+ O(|x|^{-\gamma_+ - \alpha'}), \quad a_+>0.\end{equation} If $\gamma_-=\gamma_+$, then  in $B_{R_0}^c \cap \Omega_0$
  \begin{equation}\label{3}v(x)= \bar a V_{\gamma_-}(x)+ \bar b V_{\gamma_+}+ O(|x|^{-\gamma_0 - \alpha'}), \quad
   \text{$\bar a \geq 0, \bar b \in \R$,  $\max\{\bar a, \bar b\}>0.$}
   \end{equation}
\end{prop}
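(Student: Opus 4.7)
The plan is to combine a Fourier-type decomposition on the spherical cap $\mathbb{S}^{n-1} \cap \Omega_0$ with an ODE analysis for the projection onto the positive eigenfunction $\bar v$, using barrier arguments built from dilations of the explicit solutions $V_{\gamma_\pm}$ to control the nonlinear perturbation in the boundary condition. Setting $r = |x|$, recall that the perturbation $O(v/|x|)$ in \eqref{veq} is $o(1)$ as $r \to \infty$ by Lemma \ref{linearized1}. The first step is to establish crude polynomial decay: since the quadratic $\gamma^2 - (n-2)\gamma + \lambda$ changes sign as $\gamma$ crosses each root, the functions $V_{\gamma_+ + \delta}$ and $V_{\gamma_- - \delta}$ are, for small $\delta > 0$, respectively a strict supersolution and a strict subsolution of the linearized problem \eqref{wintro}. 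Because the boundary perturbation tends to zero and $v\to 0$ at infinity, families of such barriers, suitably dilated and translated, yield $v(x) \leq C|x|^{-\gamma_- + \delta}$ for every small $\delta > 0$. A compactness/rescaling argument along sequences $v_{t_k}(x) = t_k^{\gamma_-} v(t_k x)$ then upgrades this to the sharp bound $v(x) \leq C V_{\gamma_-}(x)$.

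The second step is to pass to an ODE for the first spherical mode. Define
\[
c(r) := \int_{\mathbb{S}^{n-1} \cap \Omega_0} v(r\theta)\, \bar v(\theta)\, d\theta.
\]
Writing $\Delta v = 0$ in polar coordinates, integrating by parts on the cap, and using the boundary conditions satisfied by $v$ and $\bar v$, one obtains
\[
c''(r) + \frac{n-1}{r}\, c'(r) - \frac{\lambda}{r^2}\, c(r) = F(r),
\]
where $F$ collects the forcing arising from the $O(v/|x|)$ term in the boundary condition and, thanks to Step 1 and to the nonlinear nature of the perturbation, satisfies $|F(r)| \leq C r^{-\gamma_- - 2 - \alpha'}$ for a universal $\alpha' > 0$. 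The homogeneous ODE has fundamental system $\{r^{-\gamma_-}, r^{-\gamma_+}\}$ (or $\{r^{-\gamma_0}, r^{-\gamma_0}\ln r\}$ in the critical case), so variation of parameters yields
\[
c(r) = A\, r^{-\gamma_-} + B\, r^{-\gamma_+} + O(r^{-\gamma_- - \alpha'}),
\]
with the analogous logarithmic form when $\gamma_- = \gamma_+$. The positivity $v > 0$ forces $c(r) > 0$, whence $A \geq 0$ and, when $A = 0$, $B \geq 0$; the case $A = B = 0$ is ruled out by bootstrapping Step 1 with faster-decaying barriers to conclude $v \equiv 0$, contradicting $v > 0$.

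The third step converts the expansion of $c(r)$ into a pointwise expansion for $v$. Writing $v(r\theta) = c(r)\bar v(\theta)/\|\bar v\|_{L^2}^2 + w(r\theta)$, the remainder $w$ is $L^2$-orthogonal to $\bar v$ on every cap and satisfies an analogous perturbed linearized system in which the projection onto $\bar v$ vanishes. Exploiting the strict spectral gap $\lambda_2 > \lambda_1 = \lambda$ (since $\lambda_1$ is simple) together with barriers modeled on $|x|^{-\gamma_2^-}$-type homogeneous solutions of the unperturbed problem, one gets $|w(x)| \leq C|x|^{-\gamma_- - \alpha'}$, after shrinking $\alpha'$ if necessary to be smaller than both $\gamma_2^- - \gamma_-$ and $\gamma_+ - \gamma_-$. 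Combining this with the expansion of $c(r)$ and identifying the leading coefficients with $a_-$ and $a_+$ (respectively $\bar a, \bar b$) produces the three cases of the proposition.

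The main obstacle will be the degenerate case $\gamma_- = \gamma_+ = \gamma_0$, where the two independent solutions of the homogeneous ODE are $r^{-\gamma_0}$ and $r^{-\gamma_0}\ln r$. In that regime the variation-of-parameters formulas must be handled carefully to isolate the two coefficients $\bar a, \bar b$, and the positivity constraint on $c(r)$ must be reconciled with a potentially dominant logarithmic term; ruling out the scenario in which $\bar a < 0$ is masked by an $\ln r$ factor requires a more delicate comparison that uses both $V_{\gamma_-}$ and $V_{\gamma_+}$ in the sense of \eqref{defv2} as calibrated barriers, and tracks the precise dependence of the forcing $F(r)$ on the logarithmic part of $c(r)$.
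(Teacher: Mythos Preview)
Your approach via spectral projection onto the first spherical eigenfunction $\bar v$ is genuinely different from the paper's, which works entirely through comparison arguments with an explicit family of barriers $W_\gamma^\beta(x)=|x|^{-\gamma}(A\bar v+u_0)+\beta V_{\gamma_+}$ (here $u_0=|x|^{-1}U_0$). The crucial feature of the paper's barriers is the $u_0$ term, which makes the boundary operator $\partial_\nu+H$ take the nonzero value $|x|^{-\gamma-1}$ and thereby absorbs the nonlinear perturbation; no Fourier decomposition or ODE for a projected coefficient is used.

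Your outline has two real gaps. First, Step~1 is not set up correctly: for $|x|^{-\gamma}\bar v$ one computes $\Delta(|x|^{-\gamma}\bar v)=(\gamma^2-(n-2)\gamma+\lambda)|x|^{-\gamma-2}\bar v$, which is \emph{positive} exactly when $\gamma\notin[\gamma_-,\gamma_+]$. Hence both $V_{\gamma_+ +\delta}$ and $V_{\gamma_- -\delta}$ are subsolutions, so neither serves as an upper barrier for $v$. You also assume ``$v\to 0$ at infinity,'' but only $v(x)=o(|x|)$ is available a priori (from Lemma~\ref{asy}); proving $v\to 0$ is precisely the content of Step~1. The paper gets around this by a contradiction argument: if $v$ were large on $\partial B_R$, Harnack and comparison with a \emph{sub}solution force $v$ large on a fixed sphere.

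Second, and more seriously, your Step~3 bound $|w(x)|\le C|x|^{-\gamma_- -\alpha'}$ with $\alpha'<\gamma_+-\gamma_-$ is too weak to obtain \eqref{2} when the first coefficient $A$ vanishes: the remainder $O(|x|^{-\gamma_--\alpha'})$ then dominates $V_{\gamma_+}\sim|x|^{-\gamma_+}$, so neither $a_+$ nor the error $O(|x|^{-\gamma_+-\alpha'})$ can be extracted. A bootstrap is required (the paper carries this out as an explicit iteration of barrier comparisons pushing the decay exponent past $\gamma_+$), but you do not explain how the bound on the orthogonal part $w$ improves under iteration, especially if the second spherical eigenvalue gives $\gamma_2^-<\gamma_+$. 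Relatedly, the claim that ``$A=B=0$ forces $v\equiv 0$'' is unjustified: nothing prevents $w$ from carrying a nontrivial higher-mode component with decay $|x|^{-\gamma_2^-}$. The paper avoids this by proving directly, via a subsolution barrier, the pointwise lower bound $v\ge \bar c\,V_{\gamma_+}$, which both excludes $a_+=0$ and seeds the final improvement-of-oscillation argument.
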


With this proposition at hands, we can now obtain $(ii)$ in Theorem \ref{main}, that is the following result.

\begin{thm}\label{anal}$F(\bar U)$ is analytic.\end{thm}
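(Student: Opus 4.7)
The plan is to reduce to showing that every free boundary point of $\bar U$ is a regular (flat) point, after which analyticity follows from the Alt--Caffarelli flatness-implies-$C^{1,\alpha}$ theorem, bootstrap to $C^\infty$, and the Kinderlehrer--Nirenberg result upgrading smooth free boundaries to analytic ones. I would split $F(\bar U)$ at a universal scale $R\gg 1$: a far piece $F(\bar U)\cap B_R^c$ handled via the asymptotic expansion, and a compact piece $F(\bar U)\cap \overline{B_R}\subset \overline{B_R}\setminus B_1$ (using $\mathrm{dist}(F(\bar U),0)=1$) handled via a Weiss monotonicity/blow-up argument.

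For the far piece, Proposition \ref{vas} gives $v=\bar U-U_0$ decaying like $|x|^{-\gamma_-}$ (with the logarithmic correction when $\gamma_-=\gamma_+$), and Remark \ref{expfb} converts this into decay for the normal graph function $\psi$ that represents $F(\bar U)$ as a graph over $F(U_0)\setminus\{0\}$. The Hodograph bounds from Proposition \ref{nearby} propagate this to $C^{2,\alpha}$ decay of $\psi$. Thus $F(\bar U)\cap B_R^c$ is a small smooth perturbation of the analytic hypersurface $F(U_0)\setminus\{0\}$, and the flatness-regularity-analyticity chain above delivers analyticity on $B_R^c$.

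For the compact piece I would argue by contradiction. Assume $x_0\in F(\bar U)\cap \overline{B_R}$ is a singular point; then, passing to a subsequence, blowups at $x_0$ produce a $1$-homogeneous global minimizer $W$ with singularity at $0$, whose Weiss density I denote $\Phi_W$. Apply Weiss monotonicity (Theorem \ref{MF}) centered at $x_0$: the limit as $r\to 0^+$ is $\Phi_W$, while by Lemma \ref{asy} (so $\bar U_t\to U_0$ uniformly on compacta) together with $|x_0|/r\to 0$, the limit as $r\to\infty$ is $\Phi_{U_0}$. Monotonicity yields $\Phi_W\le \Phi_{U_0}$. Equality would force $\bar U$ to be $1$-homogeneous about $x_0$, which conflicts with the asymptotic $\bar U\sim U_0$ ($1$-homogeneous about the origin) together with $0\notin F(\bar U)$.

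The main obstacle will be the strict-inequality subcase $\Phi_W<\Phi_{U_0}$, since a singular $1$-homogeneous minimizer with lower Weiss density than $U_0$ is not automatically excluded by abstract theory. To rule it out I would exploit the strict ordering $\bar U>U_0$ on $\overline{\Omega_0}$ from Lemma \ref{separation}, which forces any singular free boundary point to lie in the open zero phase $\{U_0=0\}^o$; interpolating between $U_0$ and $\bar U$ to build competitors near $x_0$ and using minimality of $\bar U$ should collapse $W$ to a translate of $U_0$, impossible since the only candidate centers the singularity at $0\notin F(\bar U)$. Should this path be awkward, an alternative is to iterate the asymptotic analysis of Proposition \ref{vas} on the scales $\bar U_t$ downward from infinity, using the ordering $\bar U\ge U_0$ and the quantitative decay to propagate flatness, and hence analyticity, of $F(\bar U)$ all the way in to the unit scale.
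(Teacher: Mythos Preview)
Your far-piece argument is fine: once Proposition~\ref{vas} and Remark~\ref{expfb} are in hand, $F(\bar U)\cap B_R^c$ is a $C^{2,\alpha}$-small graph over the analytic cone $F(U_0)\setminus\{0\}$, so flatness implies analyticity there.

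The compact piece, however, has a genuine gap. Your blow-up/Weiss argument yields only $\Phi_W\le\Phi_{U_0}$, and you correctly identify the obstruction: nothing in the setup excludes a singular $1$-homogeneous minimizer $W$ with strictly smaller Weiss density than $U_0$. The competitor idea you sketch (``interpolating between $U_0$ and $\bar U$'') does not provide a mechanism to force $W$ to be a translate of $U_0$; the blow-up $W$ lives at an infinitesimal scale near $x_0\in\{U_0=0\}^o$ where $U_0\equiv 0$, so the ordering $\bar U\ge U_0$ carries no information into the limit. Your fallback (``iterate the asymptotic analysis downward'') is not concrete enough to be an argument either: Proposition~\ref{vas} is an expansion at infinity and does not by itself propagate flatness to a fixed compact set.

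The paper avoids this gap by a completely different, global argument. From the expansion in Proposition~\ref{vas} it proves the pointwise inequality
\[
\nabla\bar U(x)\cdot x-\bar U(x)<0\quad\text{in }\overline{\{\bar U>0\}}\cap B_{R_0}^c,
\]
equivalently $\tfrac{d}{dt}\bar U_t(x)<0$ there. This monotonicity of the dilations, combined with the Lipschitz bound on $\bar U$ and the comparison principle (Proposition~\ref{comp1}), yields for all small $\delta>0$ and all unit vectors $\tau$ that $\bar U_{1-\delta}(x+c_1\delta\tau)\ge\bar U(x)$ first in an outer annulus and then, by comparison, in all of $\mathbb{R}^n$. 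Hence $F(\bar U)$ is a Lipschitz radial graph globally, not just outside $B_R$; Caffarelli's Lipschitz-implies-$C^{1,\alpha}$ and Kinderlehrer--Nirenberg then give analyticity everywhere. The point is that regularity of the compact piece is not obtained by local blow-up analysis at all, but is inherited from the far piece through the sliding/dilation mechanism.
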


\begin{proof} 
Towards the proof of analyticity, we wish to obtain the following claim.

\medskip

\noindent \textbf{Claim 1.}  There is a universal (large)  $R_0>0$ such that
$$\nabla \bar U(x) \cdot x - \bar U(x)< 0, \quad \text{in $\overline{\{\bar U>0\}}\cap B_{R_0}^c$}.$$ 
 
 In order to prove Claim 1, we set $v:= \bar U - U_0$. Recall that, in view of Lemma \ref{linearized1}, $v>0$ satisfies \eqref{veq}, and hence the asymptotic in Proposition \ref{vas}.
Notice that since $U_0$ is homogeneous of degree 1, 

$$\nabla \bar U \cdot x - \bar U= \nabla v \cdot x - v, \quad \text{in $\Omega_0$.}$$

\begin{comment}{Moreover, by Proposition \ref{vas}, (with $\gamma_0$ being either $\gamma_+$ or $\gamma_-$ in the case of distinct roots),
\begin{equation}\label{nablaex} v = c_- V_{\gamma_-}+ c_+ V_{\gamma_+} + O(|x|^{-\gamma_0 - \alpha'}) \quad \text{in $B_{R_0}^c \cap \Omega_0$},\end{equation} for some $c_- \geq 0$ and $c_+ \in \R,$ with at least one between $c_-$ and $c_+$ strictly positive.
Now, let $v_s(x)=v(sx)/s$ with $s$ large. Then according to \eqref{nablaex}, 
$$v_s(x)= z_s(x)+ s^{-\gamma_0-\alpha'-1}q(x), \quad \text{in $B_1^c \cap \Omega_0$}$$ with $$z_s(x)= .... \quad q(x)=O(|x|^{-\gamma_0-\alpha'}).$$
Thus, with the notation in the proof of Lemma \ref{}, for some $C>0$,
$$\|z_s\|_{L^\infty(A_{1/5} \cap \Omega_0)} \leq C s^{-\gamma_0-1+\mu}, \quad \forall \mu>0.$$ In fact, in the case of distinct roots, $\mu$ can be chosen equal to zero. Otherwise, for $\mu>0$ small, call $\eps:=s^{-\gamma_0-1+\mu}.$ Then,
$$\|v_s\|_{L^\infty(A_{1/5} \cap \Omega_0)} \leq C \eps.$$
Then, in view of the first inequality in \eqref{hodo} we have,
$$\left\|\frac{v_s(x) - z_s(x)}{\eps}\right\|_{2,\alpha} \leq C, \quad \text{in $A_{2/5}\cap \bar \Omega_0,$}$$ while from the expansion above, calling $\sigma=\frac{\alpha'+\mu}{\gamma_0+1-\mu}$,
$$\left\|\frac{v_s(x) - z_s(x)}{\eps}\right\|_\infty \leq C\eps^\sigma, \quad \text{in $A_{2/5}\cap \Omega_0$}.$$
We can interpolate to obtain (because $\p \Omega_0$ is smooth),
$$\left\|\frac{v_s(x) -  z_s(x)}{\eps}\right\|_{0,1} \leq C\eps^{\sigma'},\quad \text{in $A_{7/15}\cap \Omega_0.$}$$}\end{comment}

On the other hand, by Proposition \ref{vas}, \begin{equation}\label{nablaex} v = Z_{\gamma_0}+ O(|x|^{-\gamma_0 - \alpha'}) \quad \text{in $B_{R_0}^c \cap \Omega_0$},\end{equation} with $Z_{\gamma_0}$ given by 
$$Z_{\gamma_0}=a V_{\gamma_0} \quad \text{if $\gamma_- \neq \gamma_+$}, \quad a>0,$$
and $\gamma_0$ either $\gamma_-$ or $\gamma_+$,
or  
$$Z_{\gamma_0}=a V_{\gamma_-}+ b V_{\gamma_+} \quad \text{if $\gamma_- = \gamma_+=\gamma_0$}, \quad 
\text{$a \geq 0,  b \in \R$,  $\max\{ a,  b\}>0$}. $$ 

Now, let $v_s(x)=v(sx)/s$ and similarly $(Z_{\gamma_0})_s(x)=Z_{\gamma_0}(sx)/s$. Then, for $s$ large, according to \eqref{nablaex},
$$v_s(x)= (Z_{\gamma_0})_s(x) + s^{-\gamma_0-\alpha'-1}O(1), \quad \text{in  $(B_2 \setminus \bar B_1) \cap \Omega_0,$ }$$
and from the formula for $Z_{\gamma_0}$ (see also \eqref{defv}-\eqref{defv2}),
$$\eps= \eps(s):= \begin{cases}\|(Z_{\gamma_0})_s\|_\infty \sim s^{-\gamma_0-1}, \quad \text{if $\gamma_- \neq \gamma_+$,}\\
 \|(Z_{\gamma_0})_s\|_\infty \sim s^{-\gamma_0-1}\ln s, \quad \text{if $\gamma_- = \gamma_+$}.\end{cases}$$
From this we deduce that, for $ \sigma:= \frac{\alpha'}{2(\gamma_0+1)}, $
\begin{equation}\label{000}v_s(x) = (Z_{\gamma_0})_s(x) + O(\eps^{1+\sigma}), \quad \text{in $(B_2 \setminus \bar B_1)\cap \Omega_0$},\end{equation} 
and hence for   universally   large  $s$, we have 
$\|v_s\|_\infty \leq C \eps$ in the annulus $(B_2 \setminus \bar B_1) \cap \Omega_0.$
Next, in view of the first inequality in \eqref{hodo} we have (with the notation for annuli in the proof of Lemma \ref{linearized1}),
$$\left\|\frac{v_s(x) - (Z_{\gamma_0})_s(x)}{\eps}\right\|_{2,\alpha} \leq C, \quad \text{in $A_{2/5}\cap \Omega_0,$}$$
 while from the expansion above,
$$\left\|\frac{v_s(x) - (Z_{\gamma_0})_s(x)}{\eps}\right\|_\infty \leq C\eps^\sigma, \quad \text{in $A_{2/5}\cap \Omega_0$}.$$
We can interpolate to obtain (because $\p \Omega_0 \setminus \{ 0 \}$ is smooth),
$$\left\|\frac{v_s(x) - (Z_{\gamma_0})_s(x)}{\eps}\right\|_{0,1} \leq C\eps^{\sigma'},\quad \text{in $A_{7/15}\cap \Omega_0.$}$$
Combining \eqref{000} with the estimate above, we now compute (for $s$ large)
$$
\nabla v_s \cdot x - v_s = \nabla (Z_{\gamma_0})_s \cdot x - (Z_{\gamma_0})_s+ O(\eps^{1+\sigma'})
$$  
$$\leq -C\eps (\gamma_0+1)(1+o(1)) + O(\eps^{1+\sigma'}) \leq -c \eps<0 \quad \text{in $A_{7/15} \cap \Omega_0,$}
$$ where the first inequality follows from the formula for $Z_{\gamma_0}$ (see also \eqref{defv}-\eqref{defv2}).
%$$\nabla v_s \cdot x - v_s= - (s^{-\gamma_--1}\ln s) c_- (\gamma_-+1)V_{\gamma_-}(x)- s^{-\gamma_+-1}c_+ (\gamma_++1)V_{\gamma_+}(x) + O(\eps^{1+\sigma'}) \leq - c \eps < 0, \quad \text{in $A' \cap \Omega_0$},$$
Unraveling the scaling in the inequality above, we obtain Claim 1 in  $B_{R_0}^c \cap \Omega_0$. However, we need to extend this inequality to $\overline{\{\bar U_s>0\}}$. Let us take a point $x_0 \in F(U_0)$ and argue as in Lemma \ref{linearized1}, 
 that is, in view of the $C^{2,\alpha}$ estimate on $\bar U_s$ and the fact that 
$$\bar U_s(x_0) = O(\eps), \quad \p_\nu\bar U_s(x_0)= 1 + O(\eps),$$ 
we conclude that $F(\bar U_s)$ is included in an $\eps$-neighborhood of $F(U_0)$. On the other hand, using that the $C^{2,\alpha}$ norm of $v_s$ is controlled by its $L^\infty$ norm, and that by homogeneity $\nabla U_0 \cdot x - U_0 \equiv 0$ in $A_{7/15} \cap \bar \Omega_0$, we get 
$$(\nabla \bar U_s \cdot x - \bar U_s )(x_0)=(\nabla v_s \cdot x - v_s )(x_0) \leq -C\eps,$$ 
$$ \p_\nu(\nabla \bar U_s \cdot x - \bar U_s)(x_0)=\p_\nu(\nabla v_s \cdot x - v_s)(x_0)=O(\eps),$$
from which we obtain (using the fact that the $C^{2,\alpha}$ norm of $\bar U_s$ is universally bounded) that for $s$ large universal,$$\nabla \bar U_s \cdot x - \bar U_s <0 \quad \text{in $A_{7/15}\cap \overline{\{\bar U_s>0\}}$,}$$ as desired.

Claim 1 implies,
 $$\frac{d}{dt}\bar U_t (x) < -\frac{c}{t^2} \quad \text{for $tx \in \overline{\{\bar U>0\}} \cap (B_{4R_0} \setminus B_{R_0})$},
 \qquad (c=c(R_0)).
 $$ Given $x \in \{\bar U > 0\}$ let $\bar t=\bar t(x)$ be the smallest value for which the ray $tx \in \overline{\{\bar U>0\}} \cap (B_{4R_0} \setminus B_{R_0})$ for all $t < \bar t$, then 
\begin{equation}\label{iii}\bar U_{\bar t}(x) - \bar U(x)= -\int_{\bar t}^1 \frac{d}{dt}\bar U_t (x)\; dt \geq c (1-\bar t).\end{equation} 
From this we deduce that $\bar U_{\bar t}(x)>0,$ hence $\bar U_t (x)$ is strictly decreasing in $t \in (1/2,1]$ when $x \in \overline{\{\bar U > 0\}} \cap (B_{3R_0}\setminus B_{2R_0})$.
Now, for $\delta$ small and  $x \in \overline{\{\bar U > 0\}} \cap (B_{3R_0}\setminus B_{2R_0})$, the same computation as in \eqref{iii} gives that 
$$\bar U_{1-\delta}(x) - \bar U(x) \geq c \delta. $$
On the other hand, for any unit vector $\tau$, since $\bar U_{1-\delta}$ is Lipschitz,  with Lipschitz constant in the annulus independent of $\delta$, it follows from the inequality above that for $x \in \overline{\{\bar U > 0\}} \cap (B_{3R_0}\setminus B_{2R_0}),$
$$\bar U_{1-\delta} (x+ c_1\delta \tau) \geq \bar U_{1-\delta}(x) - Lc_1\delta \geq \bar U(x),$$
as long as $c_1$ (being universal) is small enough.
 Hence,
$$\bar U_{1-\delta} (x+ c_1\delta \tau) \geq \bar U(x), \quad 2R_0 \leq |x| \leq 3R_0,$$ and
by the comparison principle Proposition \ref{comp1}, we conclude that this inequality holds for all $x$'s. Therefore
 $F(\bar U)$ is a Lipschitz radial graph, and by standard  arguments it is also locally Lipschitz. By the classical regularity theory for one phase free boundaries \cite{AC, C1, KN}, we conclude that $F(\bar U)$ is analytic. 
\end{proof}

We can now deduce the proof of parts $(iii)-(vi)$ in Theorem \ref{main}.

\medskip

\noindent\textit{Proof of Theorem \ref{main} (iii)-(vi).} Claim 1 in the proof of Theorem \ref{anal}  and Proposition \ref{vas}, provide the statements in part $(iii)-(iv)$ of Theorem \ref{main}. The statement in $(v)$ follows immediately from $(iv).$ 
We are left with the proof of $(vi)$, from which we also deduce the universality of the coefficient in the expansion in $(iii)$. Assume $V$ is not identically equal to $U_0$. Then the same arguments in Lemma \ref{asy}, Lemma \ref{linearized1} and Proposition \ref{vas} can be applied to $V$ giving that $V$ is asymptotic to $U_0$, and 
 \begin{equation} V- U_0 = V_{\gamma_1}(d+ o(1)) \quad \text{in $B_{R_0}^c \cap \Omega_0$},\end{equation} with $\gamma_1$ either $\gamma_-$ or $\gamma_+$, and $d>0$.
 
On the other hand, 
\begin{equation} \bar U- U_0 = V_{\gamma_0}(a+ o(1)) \quad \text{in $B_{R_0}^c \cap \Omega_0$},\end{equation}
with  $\gamma_0$ either $\gamma_-$ or $\gamma_+$, and $a>0$.

If $\bar U$ and $V$ both have expansions in terms of $V_{\gamma_-}$ (resp. $V_{\gamma_+}$), then we can argue as follows. Define $t_0$ by, $$a t_0^{-\gamma_- -1}= d.$$ 
Using the expansions above for a given $t>t_0$, we conclude that for $|x|$ large, $x \in \Omega_0,$ 
$$\frac{V- \bar U_t}{V_{\gamma_-}} = (d-at^ {-\gamma_--1} + o(1)), \quad d-at^{-\gamma_--1}>0.$$
Thus, for any $t>t_0$, there exists $R_t$ large such that in an annulus $(B_{2R_t} \setminus B_{R_t}) \cap \Omega_0$, 
$$V \geq c(t) \tilde\eps + \bar U_t, \quad \tilde\eps = \|V_{\gamma_-}\|_\infty,$$ where we have used that $V_{\gamma_-} \sim \|V_{\gamma_-}\|_\infty$ in $(B_{2R_t} \setminus B_{R_t}) \cap \Omega_0$.
By a rescaled version of Remark \ref{rem} we get $V \geq \bar U_t$ in the annulus  $B_{2R_t} \setminus B_{R_t}$, and the comparison Proposition \ref{comp1} gives, 
$$V \geq \bar U_t, \quad \forall t>t_0, \quad \text{in $\R^n$}. $$ Similarly, 
$$V \leq \bar U_t, \quad \forall t<t_0, \quad \text{in $\R^n$},$$
and the claim follows letting $t \to t_0.$ If the expansions are different, say $\bar U$ and  $V_{\gamma_-}$ 
have a expansions in terms of $V$ respectively  $V_{\gamma_+}$, then arguing as above we conclude that $U_t \geq V$ for all $t$'s, and by letting $t\to \infty$ we obtain $U_0 \geq V$, hence $U_0 \equiv V$ a contradiction. 
\qed

\section{The perturbed linearized problem}

This section is intended for the proof of Proposition \ref{vas}. For convenience, we recall some notation from the introduction and refer the reader to \cite{JS} for further details on the discussion below. Consider  the problem, \begin{equation}\label{w4}\begin{cases}
\Delta w=0 \quad \text{in $\Omega_0$}\\
\p_\nu w +Hw=0 \quad \text{on $\p \Omega_0 \setminus \{0\}$},\\
\end{cases}\end{equation} with $\nu$ the interior unit normal to $\p \Omega_0,$ and $-(\p_{\nu\nu}U_0)=H>0$ the mean curvature of $\p \Omega_0$ oriented towards the complement of the connected set $\Omega_0.$ 
Let  $w:= f(r) \bar v(\theta)$, with $f\ge 0$ a radial function, $r:=|x|$, and $\bar v$ the corresponding first eigenfunction of the Laplacian on $\mathbb S^{n-1} \cap \Omega_0$, i.e., 
$$\Delta_{\mathbb S^{n-1}} \bar v= \lambda \bar v, \quad \text{on $\mathbb S^{n-1} \cap \Omega_0,$}$$   satisfying the Neumann condition, 
\begin{equation}\label{ei}
\p_\nu\bar v + H \bar v=0, \quad \text{on $\p \Omega_0 \cap \mathbb S^{n-1}.$}
\end{equation} 
Then $\bar v>0$ on $\mathbb S^{n-1} \cap \bar \Omega_0$ and $\lambda>0$. We compute, 
$$
\Delta w= \bar v \Delta f + 2 \nabla \bar v \cdot \nabla f + f \Delta \bar v = \bar v\left (f'' + (n-1) \frac {f'}{r}+\lambda \frac{f}{r^2}\right),
$$
thus, for $f=r^{-\gamma}$, we obtain that $w$ solves \eqref{w4}
as long as $\gamma=\gamma_\pm$
 satisfy $\gamma^2 - (n-2)\gamma + \lambda=0$. The stability of $U_0$ is equivalent to the fact that this quadratic equation must have real roots i.e. $(n-2)^2- 4\lambda \geq 0$. Moreover, $\lambda>0,$ thus $\gamma=\gamma_\pm \in \R, \gamma_+\geq \gamma_->0$.

  If $\gamma_- \neq \gamma_+$ we call
$$V_{\gamma_\pm}(x) := |x|^{-\gamma_{\pm}} \bar v,$$
while by abuse of notation, if $\gamma_-=\gamma_+$ and we call $\gamma_0$ this common value, we set 
\begin{equation}\label{equal}V_{\gamma_-}(x):= |x|^{-\gamma_0} (\ln |x|+1) \; \bar v, \quad V_{\gamma_+}(x):= |x|^{-\gamma_0} \; \bar v.\end{equation}

Next, we build the following special family of functions, which will play an essential role in the proof of  Theorem \ref{main}. Call $u_0:= |x|^{-1}U_0,$ 
and define for real numbers $\gamma$, $ \beta$, $ A =A(\gamma)$ 
\begin{equation}\label{family}
W_{\gamma}^\beta(x)= \begin{cases}|x|^{-\gamma}(A  \bar v + u_0)\quad \text{$A>0$ if $0< \gamma < \gamma_-$},
\\
|x|^{-\gamma}(A  \bar v + u_0) \quad \text{$A<0$ if $\gamma_-< \gamma < \gamma_+$},
\\
|x|^{-\gamma}(A  \bar v + u_0) + \beta V_{\gamma_+} \quad \text{$A>0$ if $\gamma > \gamma_+.$}
\end{cases}\end{equation}
In what follows, when $\gamma < \gamma_+$, we drop the dependence on $\beta$. Notice that our definition also includes the case $\gamma_-=\gamma_+.$
Then, using the  above formula,
$$\Delta W^\beta_{\gamma}=|x|^{-\gamma-2}\{A[\gamma(\gamma-n+2)+\lambda]  \bar v+ u_0[\gamma(\gamma-n+2)+(1-n)]\}$$
and for $|A|$ large enough,
$$\Delta W^\beta_\gamma \geq 0, \quad \text{in $\Omega_0$}.$$ Moreover, using that $\bar v$ solves \eqref{ei}, while $u_0=0, (u_0)_\nu=1/r$ on $\p \Omega_0 \setminus \{0\}$, we get
\begin{equation}\label{bd}\p_\nu W^\beta_{\gamma} +HW^\beta_{\gamma}= |x|^{-\gamma-1} \quad \text{on $\p \Omega_0 \setminus \{0\}$}.\end{equation}
We remark that $W_\gamma > 0$ in $\Omega_0$ when $\gamma< \gamma_-$, and choosing $|A|$ large, $W_\gamma <0$  in $\Omega_0$ when $\gamma_-<\gamma< \gamma_+$. The sign of $W_\gamma^\beta$ depends on the choice of $\beta.$

Having introduced this family of functions, we can now provide the proof of Proposition \ref{vas}.
%\begin{comment}{whose statement for convenience we recall here.
%
%\begin{prop} Let $v > 0$ be a classical solution to 
%\begin{equation}\label{veqsection4}\begin{cases}
%\Delta v=0 \quad \text{in $\Omega_0 \cap B^c_{R}$}\\
%\p_\nu v +H(1+ O(\frac{v}{|x|}))v=0 \quad \text{on $\p \Omega_0 \cap B^c_{R}$},\\
%\end{cases}\end{equation} with $\nu$ the interior unit normal to $\p \Omega_0$,
%and 
%$$O\left(\frac{v}{|x|}\right)= o(1), \quad \text{as $|x| \to \infty$}.$$
%
% If $\gamma_- \neq \gamma_+$, there exist $\alpha'>0$ small universal and $R_0>0$ large universal, such that in $B_{R_0}^c \cap \Omega_0$, either $$v(x)=a_- V_{\gamma_-}(x)+ O(|x|^{-\gamma_- - \alpha'}), \quad a_->0,$$ or 
%\begin{equation}\label{2}v(x)=a_+ V_{\gamma_+}(x)+ O(|x|^{-\gamma_+ - \alpha'}), \quad a_+>0.\end{equation} If $\gamma_-=\gamma_+$, then  $$v(x)= \bar a V_{\gamma_-}(x)+ \bar b V_{\gamma_+}+ O(|x|^{-\gamma_0 - \alpha'}), \quad 
% \text{$\bar a \geq 0,$ $b \in \R$, and $\max \{\bar a=0 , \bar b>0\} > 0.$ }$$
% 
%\end{prop}}\end{comment}
The proof relies on a comparison principle for solutions to \eqref{veq} in an annulus $(B_R \setminus \bar B_1) \cap \Omega_0$, which we prove in the next lemma. Consider the problem,
\begin{equation}\label{veqsection4general}\begin{cases}
\Delta V=0 \quad \text{in $D$}, \\
\p_\nu V = h(x)V\quad \text{on $T \subset \p D$},\\
\end{cases}\end{equation} with $D$ a Lipschitz domain in $\R^n$, $T$ a smooth open subset of $\p D$, and $\nu$ the inward unit normal to $T$. 

\begin{lem}\label{COMP} If there exists a classical supersolution to \eqref{veqsection4general} (continuous up to $\p D$) such that $V>0$ on $\bar D$, then the comparison principle holds i.e. if $z,w$ are respectively a classical supersolution and a classical subsolution to  \eqref{veqsection4general}, with $z \geq v$ on $\p D \setminus T$, then $z \geq v$ in $D.$
\end{lem}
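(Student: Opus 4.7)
The plan is to run the standard ``division by a positive supersolution'' argument: set $u := z - w$ and $\phi := u/V$, and show $\phi \geq 0$ on $\bar D$ by combining the strong maximum principle in the interior of $D$ with Hopf's lemma on $T$. This trick is essential because the Robin/Neumann-type boundary condition $\partial_\nu V = hV$ carries no sign that cooperates with a direct maximum principle applied to $u$ itself; dividing by the given $V > 0$ is what turns the problem into a standard one.

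First I would collect the differential inequalities satisfied by $u$ and by $\phi$. From the definitions of classical super/subsolution one has $\Delta u \leq 0$ in $D$, $\partial_\nu u \leq h u$ on $T$, and $u \geq 0$ on $\partial D \setminus T$ by hypothesis. Since $V \in C(\bar D)$ with $V > 0$, the ratio $\phi$ is also in $C(\bar D)$; expanding $\Delta u = V\Delta \phi + 2\nabla V \cdot \nabla \phi + \phi \Delta V$ gives
$$L\phi := \Delta \phi + \frac{2}{V}\nabla V \cdot \nabla \phi, \qquad L\phi - c\,\phi \,=\, \frac{\Delta u}{V} \,\leq\, 0, \qquad c := -\frac{\Delta V}{V} \geq 0.$$
Thus $L - c$ is a linear elliptic operator with non-positive zeroth-order coefficient, precisely the setting in which the strong maximum principle and Hopf's lemma apply.

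Suppose for contradiction that $m := \min_{\bar D} \phi < 0$. Since $V > 0$ and $u \geq 0$ on the closed set $\partial D \setminus T$, one has $\phi \geq 0$ there, so the minimum is attained either at an interior point of $D$ or at a point of $T$. In the interior case the strong maximum principle forces $\phi \equiv m$, contradicting $\phi \geq 0$ on $\partial D \setminus T$. In the boundary case, let $x_1 \in T$ realize the minimum; the smoothness of $T$ supplies the interior sphere condition at $x_1$ and Hopf's lemma yields $\partial_\nu \phi(x_1) > 0$ for the inward normal $\nu$. On the other hand, rewriting $\partial_\nu u = V \partial_\nu \phi + \phi\, \partial_\nu V$ and inserting the boundary inequalities $\partial_\nu u \leq h u$ and $\partial_\nu V \leq h V$ gives
$$V(x_1)\, \partial_\nu \phi(x_1) \,\leq\, \phi(x_1)\bigl(h(x_1) V(x_1) - \partial_\nu V(x_1)\bigr) \,\leq\, 0,$$
since $\phi(x_1) < 0$ and $hV - \partial_\nu V \geq 0$. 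As $V(x_1) > 0$, this forces $\partial_\nu \phi(x_1) \leq 0$, a contradiction. Hence $\phi \geq 0$ throughout $\bar D$, i.e., $z \geq w$ in $D$.

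The main conceptual step, and the reason the positivity hypothesis on $V$ is indispensable, is the gauge choice $\phi = u/V$; once done, everything reduces to standard elliptic maximum principle technology. The smoothness of $T$ is used precisely to validate Hopf's lemma at points of $T$, while the Lipschitz regularity of $D$ near $\partial D \setminus T$ is harmless because the ordering $z \geq w$ is already imposed on that portion of the boundary, so the minimum analysis there is trivial.
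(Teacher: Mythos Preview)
Your proof is correct and follows essentially the same strategy as the paper: both arguments divide by the positive supersolution $V$, reduce to showing the ratio has the right sign via the strong maximum principle in the interior, and rule out a boundary extremum on $T$ by Hopf's lemma combined with the Robin inequality for $V$. The only cosmetic difference is that the paper works with $\max\, (v-z)/V$ and the function $MV-w$ directly, whereas you expand the elliptic operator satisfied by $\phi=(z-w)/V$; the content is identical.
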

\begin{proof} Let $w:= v-z$, and set  $M:= \max_{\bar D}{\frac w V} = \frac{w}{V}(x_0)$ for some $x_0 \in \bar D$. If $x_0 \in \p D \setminus T$, then by our assumptions $M \leq 1,$
 which implies our claim. Similarly, if $x_0 \in D$, then $MV - w$ attains a minimum at $x_0$ and by the maximum principle $MV \equiv w$, which contradicts out assumptions. Finally, $x_0$ cannot occur on $T$. Indeed, again $M V -w \geq  0$ attains a minimum at $x_0$ and  by Hopf Lemma, $\p_\nu (M V -w)(x_0)>0$. On the other hand,
$$0 < M \p_{\nu} V(x_0) -\p_{\nu}w(x_0) \leq h(x_0)(MV-w)(x_0)= 0,$$
 a contradiction. \end{proof}

Besides the above comparison principle we also use the Harnack inequality for inhomogeneous Neumann problems, see for example \cite{L}. The precise statement is provided in the Appendix.

\smallskip

We are now ready to prove Proposition \ref{vas}.

\medskip

\noindent\textit{Proof of Proposition \ref{vas}}. The proof is divided into 5 steps. For simplicity, in each step we consider first the case when $\gamma_- \neq \gamma_+$, and then point out the modifications needed for the case $\gamma_-=\gamma_+$. 

Since $U_0$ is homogeneous of degree $1$, after a dilation $x \to \rho x$, $\rho=\rho(\delta)$ large, we may assume that
\begin{equation}\label{veqdilat}\begin{cases}
\Delta v=0 \quad \text{in $\Omega_0 \cap B_{1}^c$}\\
\p_\nu v +H(1+ o(1))v=0 \quad \text{on $\p \Omega_0 \cap B_{1}^c$},\\
\end{cases}\end{equation}
with
\begin{equation}\label{o1}|o(1)| \leq \delta,\end{equation}
for $\delta>0$ to be made precise later. By abuse of notation, in what follows all dilations of $v$ will still be denoted by $v$.

\

\textbf{Step 1: Decay at infinity.} Let $v$ satisfy \eqref{veqdilat}-\eqref{o1}. Given $0<\gamma< \gamma_-$, there exists $M= M(\gamma, v)>0$ large such that
 $$v \leq M |x|^{-\gamma} \quad \text{on $\Omega_0 \cap B_1^c.$}$$

For this step, we do no need to distinguish whether $\gamma_-$ and $\gamma_+$ coincide or not.
Indeed, let $\gamma$ be given and the constants below possibly depend on $\gamma$. To prove the desired bound, we use the function $W_\gamma$ defined in \eqref{family} (we drop the dependence on $\beta$ in the regime $\gamma<\gamma_-$), and we construct a subsolution $W=- CV_{\gamma_-} + c W_{\gamma}$  to \eqref{veqdilat}, such that for large $C$ and  small $c$, \begin{equation}\label{b1}W \leq R^{-\gamma} \quad \text{on $\p B_R \cap \Omega_0$, $R$ large}, \quad W \leq 0 \quad \text{on $\p B_1 \cap \Omega_0$}.\end{equation} This is possible because $\gamma < \gamma_-,$ hence $W_{\gamma}>0$ is the leading term. For the boundary condition to yield a subsolution we need,
$$\p_\nu W +H(1+o(1))W\geq 0 \quad \text{on $\p \Omega_0 \cap B_{1}^c$},$$ which in view of \eqref{bd} and \eqref{o1} holds as long as (recall that $V_{\gamma_-}$ solves \eqref{w4})
$$-CH\delta V_{\gamma_-} + c |x|^{-\gamma-1} - c\delta H W_\gamma \geq 0, \quad \text{on $\p \Omega_0 \cap B_{1}^c.$}$$
This can be achieved by choosing $\delta$ small.

Moreover, again since $W_\gamma$ is the leading term, we can pick $\bar R < R$ large, such that
\begin{equation}\label{outb2}\bar R^{-\gamma_-} \leq W \quad \text{on $\p B_{\bar R} \cap \Omega_0$}.\end{equation}

Now, for $M$ large to be specified later, let us assume by contradiction that $v(\bar x)  > M R^{-\gamma}$ at some point on 
$\partial B_R\cap  \Omega_0$. 
Then by the interior Harnack inequality and the Harnack inequality for the Neumann problem \eqref{veq} (see Theorem \ref{hn}),
 we have that $v  \geq \bar c M R^{-\gamma}$ on $\p B_R \cap \overline \Omega_0$, with $\bar c>0$ universal. Thus, using \eqref{b1},  and the fact that $v \geq 0,$ we have that 
$$v \geq \bar cM W \quad \text{on $(\p B_{R} \cup \p B_{1}) \cap \Omega_0$}.$$ By Lemma \ref{COMP}, which can be applied because $v$ itself is a solution to \eqref{veqdilat} which is strictly positive up to the boundary, we get 

$$v \geq \bar cM W \quad \text{in $(B_{R} \setminus B_{1})\cap \Omega_0$}.$$ In particular, by \eqref{outb2} above,
$$v \geq c(\bar R)M \quad \text{on $\p B_{\bar R} \cap \Omega_0,$}$$ hence we reach a contradiction if $M$ is large enough, and Step 1 is proved. 

\smallskip

Now, choosing $\gamma=\gamma_-/2$, for a universal $\alpha_0=\gamma_-/2+9/10$ we have by Step 1 that, $$\frac{v(x)}{|x|}= o(|x|^{-\alpha_0}), \quad |x| \to \infty$$ hence, again after a dilation, we can assume that $v$ satisfies:
\begin{equation}\label{pertalpha}\begin{cases}
\Delta v=0 \quad \text{in $\Omega_0 \cap B_1^c,$}\\
\p_\nu v +H (1+ o(|x|^{-\alpha_0}))v=0 \quad \text{on $\p \Omega_0 \cap B_1^c$},\\
\end{cases}\end{equation}
with 
\begin{equation}\label{o2}|o(|x|^{-\alpha_0})| \leq \delta |x|^{-\alpha_0}\end{equation}
and $\delta$ small universal to be chosen later.

%\noindent \textbf{Remark for Step 1.} Notice that Step 1 remains valid even in the case when $\gamma_-=\gamma_+,$ with the notation in \eqref{equal}. 

\smallskip

\textbf{Step 2: Improved Decay at infinity.} Let $v$ satisfy \eqref{pertalpha}-\eqref{o2}. There exists $M_->0$ large, such that $$\frac{v}{V_{\gamma_-}} \leq M_- \quad \text{on $ \Omega_0 \cap B_1^c.$}$$

The argument follows the lines of Step 1, but we need to distinguish whether $\gamma_-$ and $\gamma_+$ coincide or not. If the roots are distinct, fix $\gamma \in (\gamma_-, \gamma_+)$. We construct a subsolution $W=V_{\gamma_-} + W_{\gamma}$ to \eqref{pertalpha}, such that ($R=R(\gamma)$ large), 
\begin{equation}\label{outb3}
W \leq \frac 3 2 V_{\gamma_-}\quad \text{on $\p B_R \cap \Omega_0$}, \quad W \leq 0 \quad \text{on $\p B_1 \cap \Omega_0$}.
\end{equation}
The desired inequalities can be achieved as $W/V_{\gamma_-} \to 1$ as $r \to \infty$ and by choosing $|A|$ possibly larger. Moreover, for the same reason, we can pick 
 a large $\bar R <R$ such that 
\begin{equation}\label{outb4}\frac 1 2 V_{\gamma_-} \leq W \quad \text{on $\p B_{\bar R} \cap \Omega_0$}.
\end{equation}
Now, for the boundary inequality to be satisfied, we need to choose (see \eqref{o2}), $$\gamma_- < \gamma \leq \gamma_- + \alpha_0,$$ and $\delta$ small enough, so that
\begin{equation}\label{b2} -\delta H|x|^{-\alpha_0} V_{\gamma_-} +  |x|^{-\gamma-1} - \delta H|x|^{-\alpha_0} W_\gamma \geq 0, \quad \text{on $\p \Omega_0 \cap B_{1}^c.$}\end{equation}

 The argument is now the same as in Step 1. Given $M$ large, to be specified later, let us assume by contradiction that $\frac{v}{V_{\gamma_-}}  \geq M$ at some point  at some point on  $\partial B_R\cap  \Omega_0$. 
Then by the Harnack inequality (interior and for a Neumann problem as in Theorem \ref{hn}),
we have that $\frac{v}{V_{\gamma_-}}  \geq c M$ on $\p B_R \cap \overline \Omega_0$. 
This combined with \eqref{outb3} and Lemma \ref{COMP} (again since $v>0$) gives that,
$$\frac{v}{V_{\gamma_-}} \geq \bar cM \frac{W}{V_{\gamma_-}} \quad \text{in $(B_{R} \setminus B_{1})\cap \Omega_0$}.$$
Hence, in view of \eqref{outb4}, $$v \geq c(\bar R)M \quad \text{on $\p B_{\bar R} \cap \Omega_0$},$$
 which is  a contradiction if $M$ is large enough. 
 
 In the case when $\gamma_- =\gamma_+$, we need to choose $W=V_{\gamma_-} + W_\gamma^\beta$ and let $\beta$ be negative and $|\beta|$ large, so that the second inequality in \eqref{outb3} holds (see \eqref{family} for the definition of $W_\gamma^\beta$).

\

\textbf{Step 3: Limit at infinity and expansion.} Let $v$ satisfy \eqref{pertalpha}-\eqref{o2}. Then, $$\lim_{|x|\to\infty} \frac{v}{V_{\gamma_-}} = a_-\geq0.$$
Moreover, if $\gamma_-\neq \gamma_+,$
\begin{equation}\label{sum1}v(x)=a_- V_{\gamma_-}(x)+ O(|x|^{-\gamma_- - \alpha'}), \quad \text{in $\Omega_0 \cap B_1^c$},\end{equation} while if $\gamma_-=\gamma_+$,
\begin{equation}\label{sum2}v(x)=a_- V_{\gamma_-}(x)+ O(V_{\gamma_+}), \quad \text{in $\Omega_0 \cap B_1^c$}.\end{equation}

We consider first that case when $\gamma_- \neq \gamma_+$. For $\rho \geq 1$, let $$a(\rho):= \sup \{a \geq 0 \ | \ v \geq a V_{\gamma_-} \ \text{in $B_\rho^c \cap \Omega_0$}\}.$$

This is an increasing function which in view of Step 2 is bounded by $M_-$. Thus, there exists
$$a_-:= \lim_{\rho \to \infty} a(\rho) \geq 0,$$ and by definition of $a(\rho)$,
\begin{equation}\label{liminf}\liminf_{|x|\to\infty} \frac{v}{V_{\gamma_-}} \geq a_-.\end{equation}
We wish to show that 
$$\limsup_{|x|\to\infty} \frac{v}{V_{\gamma_-}} \leq a_-,$$
from which our claim will follow. Assume by contradiction that for a small $\eta>0$ along a sequence of points $x_k \in \Omega_0$ with $\rho_k:=|x_k| \to \infty$,
\begin{equation}\label{bobo}\frac{v}{V_{\gamma_-}}(x_k) \geq a_- +\eta,\end{equation}
while in view of \eqref{liminf}, for $k$ large and $\eps \ll  \eta$,
\begin{equation}\label{bo}\frac{v}{V_{\gamma_-}} \geq a_--\eps \quad \text{in $B_{\rho_k/2}^c \cap \Omega_0$}.\end{equation}
Thus, by the Harnack inequality (Theorem \ref{hn}) applied to 
 $v-(a_--\eps)V_{\gamma_-}$ we conclude that for $k$ large,
\begin{equation}\label{HC}\frac{v}{V_{\gamma_-}} \geq a_-+ c\eta, \quad \text{on $\p B_{\rho_k}$} \cap \Omega_0.\end{equation}
Indeed, let us call $w:= v-(a_--\eps)V_{\gamma_-}$ and consider for $\frac 1 2 < |x|< 2$ the functions $$w_k(x)= \rho_k^{\gamma_-} v(\rho_k x) -(a_--\eps) V_{\gamma_-}(x). $$ In view of \eqref{bo}, $w_k \geq 0,$ and by \eqref{bobo}, $w_k(\bar x) \geq  c' \eta$ at some point $\bar x \in \p B_1 \cap \Omega_0$. On the other hand, $v_k(x)= \rho_k^{\gamma_-} v(\rho_k x)$
satisfies
$$\begin{cases}
\Delta v_k=0 \quad \text{in $\Omega_0 \cap (B_2 \setminus \bar B_{1/2}),$}\\
\p_\nu v_k +H(1+ o(1))v_k=0 \quad \text{on $\p \Omega_0 \cap (B_2\setminus \bar B_{1/2})$},\\
\end{cases}$$
and in view of Step 2, it is uniformly bounded. Thus by standard regularity estimates, the $v_k$'s are uniformly H\"older continuous in $(B_{15/8} \setminus B_{5/8}) \cap \Omega_0$ and, up to extracting a subsequence, $w_k$ converges uniformly to a nonnegative limiting function $\bar w$ which solves the unperturbed problem \eqref{w4} in $(B_{7/4} \setminus \bar B_{3/4}) \cap \Omega_0$ and $\bar w (\bar x) \geq c'\eta$ at some point on $\p B_1 \cap \Omega_0$. By the Harnack inequality and the uniform convergence we get,
$w_k \geq c \eta$ on $\p B_1 \cap \Omega_0$ for $c$ universal, and unraveling the scaling this gives the desired claim \eqref{HC}.

As in Step 2, we now build a subsolution
$$W= (a_- + \frac c 2 \eta)V_{\gamma_-} + W_\gamma, \quad \gamma_-<  \gamma \leq \gamma_-+\alpha_0, \quad \gamma < \gamma_+,$$ 
such that for $k\geq k_0$ large
 $$W \leq (a_-+c\eta)V_{\gamma_-} \quad \text{on $\p B_{\rho_k} \cap \Omega_0.$}$$
Thus by Lemma \ref{COMP}, for $k$ large,
$$v \geq W \quad \text{in $(B_{\rho_k} \setminus B_{\rho_{k_0}}) \cap \Omega_0$},$$ hence outside a very large ball 
$$\frac{v}{V_{\gamma_-}} \geq a_- + \frac c 4 \eta, \quad \text{in $B_{R}^c \cap \Omega_0$}, $$
contradicting the definition of $a_-$.
Finally, in order to obtain the expansion, we use a similar argument as above to trap $v$ in any annulus $B_R \setminus B_1$ with $R$ large, between a subsolution and a supersolution, respectively, 
\begin{equation}\label{ss}(a_- - \eps)V_{\gamma_-} + CW_\gamma, \quad (a_- + \eps)V_{\gamma_-} - C W_\gamma,\end{equation}
by choosing $C$ large enough. Letting $\eps$ go to zero we get the desired expansion with $\alpha'=\gamma$. 
Rescaling back, we obtain the desired expansion. 

There  are two modifications needed in the case $\gamma_-=\gamma_+$. The first is the definition of $w_k$ and $v_k$, that is
$$w_k(x)= \frac{1}{\ln \rho_k + 1}\rho_k^{\gamma_-} v(\rho_k x) -(a_--\eps)[ V_{\gamma_+}(x) + \frac{1}{\ln \rho_k + 1}V_{\gamma_-}(x)], $$ 
and
$$v_k(x)=\frac{1}{\ln \rho_k + 1} \rho_k^{\gamma_-} v(\rho_k x).$$

The second one is in the construction of the subsolution/supersolution in the last step of the previous argument. Indeed the subsolution and the supersolution in \eqref{ss} must be replaced by
\begin{equation}\label{ss2}(a_- - \eps)V_{\gamma_-} + CW^{\beta}_\gamma, \quad (a_- + \eps)V_{\gamma_-} - C W^{\beta}_\gamma,\end{equation}
with $\beta<0$ and $|\beta|$ large.

\

\textbf{Step 4: The case $\gamma_- \neq \gamma_+$. (Comparison with $V_{\gamma_+}$}.)
 Let $v$ satisfy \eqref{pertalpha}-\eqref{o2} and assume that $a_-=0$ (from Step 3) and
 $$\frac{v}{V_{\gamma_-}} = O(|x|^{-\alpha'}), \quad \text{in $B_1^c$}, \quad \gamma_- < \alpha' \leq \gamma_-+\alpha_0, \quad \alpha'< \gamma_+.$$
There exists $M_+, R>0$ large, such that $$\frac{v}{V_{\gamma_+}} \leq M_+ \quad \text{on $B_{R}^c \cap \Omega_0.$}$$

Since $a_-=0$, we can repeat the same arguments as above with $\alpha_0$ replaced by $\alpha_1=\alpha'+\gamma_-+9/10 > \alpha_0$, that is a dilation of $v$ satisfies \eqref{pertalpha} with $\alpha_0$ replaced by $\alpha_1$. If  $ \gamma_-+\alpha_1 < \gamma_+$, we  use Lemma \ref{COMP} and compare $v$ with the supersolution (choose $\bar \gamma \leq \gamma_-+\alpha_1$ according to a similar computation as in \eqref{b2})
$$Z= \eps V_{\gamma_-} - C W_{\bar \gamma}.$$
%with $$\beta= 1, \quad \text{if $\bar \gamma < \gamma_+$}, \quad \beta=-2A \quad \text{otherwise}.$$
We check that  $$v \leq Z \quad \text{on $\p B_{1}\cap \Omega_0$, \quad for $C$ large}$$ and $$v \leq Z \quad \text{on $\p B_R \cap \Omega_0$, \quad for $R$ large.}$$ The first inequality holds as $W_{\bar\gamma} < - \bar C<0.$ The second inequality follows as $a_-=0$ and $\frac{W_{\bar \gamma}}{V_{\gamma_-}} \to 0$ as $|x| \to \infty.$
%($v \sim r^{-\alpha'-\gamma_-}, W_{\bar\gamma} \sim r^{-\bar\gamma}$ so we need $\bar \gamma \geq \alpha'+\gamma_-$ which is possible because $\alpha_1> \alpha'$.)
As $\eps \to 0$ we get that $$v \leq -CW_{\bar\gamma} \quad \text{in $B_{1}^c \cap \Omega_0$}.$$ We repeat the same argument with $\alpha_2= \bar \gamma +9/10 > \alpha_1$, and we continue till we reach the first $l\geq 1$ for which $\gamma_-+\alpha_l > \gamma_+.$ Then, via Lemma \ref{COMP}, we compare $v$ with the supersolution ($\gamma_+< \bar \gamma \leq \gamma_-+\alpha_l $)
$$Z= \eps V_{\gamma_-} - C W^\beta_{\bar \gamma},$$ (again see \eqref{family} for the definition of $W_\gamma^\beta$) and choose $\beta < 0$  and  $|\beta |$ large enough to guarantee that $v \leq Z$ on $\p B_{1}\cap \Omega_0$ (a similar computation as in \eqref{b2} can be carried out again).
Therefore we obtain that $$v \leq -CW^\beta_{\bar\gamma} \quad \text{in $B_{1}^c \cap \Omega_0$},$$ and the claim follows from the definition of $W_{\bar\gamma}^\beta.$

\

\textbf{Step 5: Separation from 0 and Expansion.}
Let $v$ solve \eqref{pertalpha}-\eqref{o2} and whenever $\gamma_- \neq \gamma_+$
 replace $\alpha_0$ by $\alpha_l$ from Step 4. Then,
\begin{equation}\label{PP}v \geq \bar c V_{\gamma_+}, \quad \text{in $B_{2}^c \cap \Omega_0$, \quad $\bar c>0$}.\end{equation}
To see this, we  let $a= v(\bar x)>0$ for some $\bar x \in \p B_{2} \cap \Omega_0$. By the Harnack inequality in $(B_3\setminus \bar B_1) \cap \Omega_0$, $v \sim a$ on $\p B_{2} \cap \Omega_0.$  Define (the subsolution) $W:= -\eps V_{\gamma_-} + W^1_{\gamma}$ (with $\gamma>\gamma_+$),\footnote{In view of the iteration argument at the end of the previous step, this also works  in the case of distinct roots.} such that $W \leq C$ on $\p B_{2} \cap \Omega_0$ and $W \leq 0$ for $R$ large. As usual, we are using the function $W^\beta_\gamma$ defined in \eqref{family}. Then $v \geq ca W$ in $(B_R \setminus B_2) \cap \Omega_0$ and by letting $\eps \to 0$ and using the definition of $W^1_\gamma$, we get the desired statement.

To conclude the proof of the expansions \eqref{2}-\eqref{3}, we prove the following improvement of oscillation. First we consider the case of distinct roots.

\smallskip

\noindent {\it Improvement of oscillation.}
There exist positive sequences $\{a_k\}$ increasing and $\{b_k\}$ decreasing with $$b_{k+1} - a_{k+1} = (1-c)(b_k - a_k)$$ such that 
\begin{equation}\label{basis}
a_k \leq \frac{v}{V_{\gamma_+}} \leq b_k, \quad \text{in $B_{2^k}^c \cap \Omega_0$}.
\end{equation} 
Let \eqref{basis} hold for some $k \geq 1$,   call $\rho_k:= 2^k$, and for $\mu>0$ small, write $b_k - a_k = \rho_k^{-\mu}$. Define,
$$w(x):= \rho_k^{\mu+\gamma_+}(v - a_kV_{\gamma_+})(\rho_k x), \quad x \in B_1^c \cap \Omega_0.$$ Then $w$ is harmonic in $B_1^c \cap \Omega_0$ and it satisfies the boundary condition
$$w_\nu + Hw= o(|x|^{-\alpha_l - 1 -\gamma_+}) \quad \text{on $\p \Omega_0 \cap B_1^c$},$$ with $|o(|x|^{-\alpha_l - 1 -\gamma_+})| \leq \delta |x|^{-\alpha_l - 1 -\gamma_+}$. Moreover, in view of \eqref{basis},
$$0 \leq \frac{w}{V_{\gamma_+}} \leq 1, \quad \text{in $\Omega_0 \cap B_1^c.$}$$ Assume that $w \geq \frac 1 2$ at some point $x \in \p B_{3/2} \cap \Omega_0$. Then by the interior Harnack inequality and the Harnack inequality for a Neumann problem (see Theorem \ref{hn})  $\frac{w}{V_{\gamma_+}} \geq c$ on $\p B_{3/2} \cap \Omega_0$. We build a subsolution $W$ (to the problem satisfied by $w$), such that \begin{equation}\label{ww}
W \leq c, \quad \text{on $\p B_{3/2} \cap \Omega_0$}, \quad W \leq 0 \quad \text{on $\p B_R \cap \Omega_0$, $R$ large}.\end{equation}
Set
$$W= c_1 V_{\gamma_+} + c_2W_\gamma^0 - \eps V_{\gamma_-}, \quad \gamma> \gamma_+.$$
 Then, the first inequality in \eqref{ww} is satisfied for $c_1, c_2$ small, while the second one follows as $W/{V_{\gamma_+}} \to -\infty$ as $|x| \to \infty.$ We need to verify that 
$$W_\nu + HW\geq  \delta (|x|^{-\alpha_l - 1 -\gamma_+}) \quad \text{on $\p \Omega_0 \cap B_{3/2}^c$},$$
or equivalently, 
$$c_2 |x|^{-\gamma-1} \geq \delta |x|^{-\alpha_l - 1 -\gamma_+} \quad \text{on $\p \Omega_0 \cap B_{3/2}^c$}.$$
This holds for $\delta$ small, as long as $\gamma< \alpha_l + \gamma_+.$ By  Lemma \ref{COMP}, which can be applied because $V_{\gamma_+}$ is a solution to \eqref{w4}, which is strictly positive on the closure of $D:= (B_R \setminus \bar B_{3/2}) \cap\Omega_0$, while $W-w$ is a subsolution to \eqref{w4} in $D$, non-positive on $(\p B_R \cup \p B_{3/2}) \cap \Omega_0,$ thus
$$w \geq W \quad \text{in $(B_R \setminus B_{3/2}) \cap \Omega_0$}$$ and by letting $\eps \to 0$ we deduce,
$$w \geq c_1 V_{\gamma_+} \quad \text{in $B^c_{2} \cap \Omega_0,$}$$ which after unraveling what $w$ is, gives the desired improvement. 

When $\gamma_- = \gamma_+$ the only modification is that, using the expansion in Step 3, \eqref{basis} will be satisfied by $v -a_- V_{\gamma_-}$, and \begin{equation}\label{last}w:= w(x):= \rho_k^{\mu+\gamma_+}(v -a_- V_{\gamma_-}- a_kV_{\gamma_+})(\rho_k x).\end{equation} The rest of the proof  works in the same way.

The proof of the proposition is now complete. Indeed, in the case $\gamma_- \neq \gamma_+$, \eqref{sum1} immediately gives the desired expansions \eqref{1} if $a_- \neq 0.$ Otherwise, by standard arguments, the improvement of oscillation in Step 5 leads to \eqref{2} (after rescaling back), with $\bar a>0$ in view of \eqref{PP}. If $\gamma_-=\gamma_+$, again the improvement of oscillation in Step 5 and \eqref{PP} give \eqref{3} (see \eqref{last}).

\qed

\section{Appendix}
In this Appendix, we provide a  $C^{2,\alpha}$ estimate for the difference of two nearby smooth solutions of \eqref{visc} in terms of the $L^\infty$ norm. This estimate is essential in our proof of Theorem \ref{main}, and it relies on the use of the hodograph transform. We refer to \cite{KN} for further details on this important tool. Here universal constants only depend on dimension.

\begin{prop}\label{nearby}Let $u_1, u_2$ be classical solutions to \eqref{visc} in $B_1$, with 
\begin{equation}\label{flat}(x_n- \eps_1)^+ \leq u_1\leq u_2 \leq (x_n + \eps_1)^+,\end{equation} 
for $\eps_1$ small universal. Then,  $v:=u_2-u_1$ satisfies \begin{equation}\label{hhodo}\|v\|_{C^{2,\alpha}(B_{1/2} \cap \overline{\{u_1>0\})}} \leq C \|v\|_{L^\infty(B_1)}, \quad \frac{v(x)}{v(y)} \leq C, \quad x,y \in B_{1/2} \cap \overline{\{u_1>0\}},\end{equation} with $C>0$ universal.
\end{prop}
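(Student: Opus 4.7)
The plan is to use the partial hodograph transform as in \cite{KN} to straighten both free boundaries simultaneously, reducing the two estimates for $v = u_2 - u_1$ to Schauder and Harnack bounds for a single linear oblique boundary value problem on a flat half-domain.

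First, I would invoke the flatness-implies-regularity theorem \cite{C1,C2}: for $\eps_1$ small universal, \eqref{flat} gives that each $u_i$ is $C^{2,\alpha}$ up to $F(u_i)$ in $B_{3/4}$ with universal bounds, and $\partial_{x_n} u_i \geq c > 0$ universally. This allows me to invert $x_n \mapsto u_i(x', x_n)$ for each fixed $x'$: define
$$\phi_i(x', t) = x_n \Longleftrightarrow u_i(x', x_n) = t, \qquad (x', t) \in B'_{2/3} \times [0, t_0],$$
for a universal $t_0>0$, so that $F(u_i) = \{x_n = \phi_i(x', 0)\}$, $\phi_i$ is $C^{2,\alpha}$ with universal bounds, and $\phi_i$ is close to the pure translation $x_n$. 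Since $u_2 \geq u_1$, one has $\psi := \phi_1 - \phi_2 \geq 0$, and $\partial_t \phi_i = 1/\partial_{x_n} u_i$ combined with a mean-value argument in $t$ yields
$$c\, v(x', x_n) \leq \psi(x', u_1(x', x_n)) \leq C\, v(x', x_n),$$
with universal constants. Moreover, the map $(x', t) \mapsto (x', \phi_1(x', t))$ is a uniform $C^{2,\alpha}$ diffeomorphism onto its image, which (choosing $t_0$ appropriately) covers $B_{1/2} \cap \overline{\{u_1 > 0\}}$.

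Next, $\Delta u_i = 0$ translates to a quasilinear elliptic PDE $Q(D^2 \phi_i, D\phi_i) = 0$ in $B'_{2/3} \times (0, t_0)$, and $|\nabla u_i| = 1$ becomes the nonlinear oblique condition $(\partial_t \phi_i)^2 = 1 + |\nabla_{x'} \phi_i|^2$ at $t = 0$. Subtracting the equations for $\phi_1, \phi_2$ and linearizing, $\psi$ satisfies
$$L\psi = 0 \quad \text{in } B'_{2/3} \times (0, t_0), \qquad B\psi = 0 \quad \text{on } B'_{2/3} \times \{0\},$$
where $L$ is uniformly elliptic with $C^\alpha$ coefficients (depending on $\phi_1, \phi_2$) and $B$ is a regular oblique operator, with uniform ellipticity and obliqueness constants provided by $\phi_i \approx x_n$. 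Classical Schauder estimates for such mixed problems \cite{L,KN} then yield
$$\|\psi\|_{C^{2,\alpha}(B'_{1/2} \times [0, t_0/2])} \leq C \|\psi\|_\infty,$$
while Harnack's inequality for the non-negative solution $\psi$ of the homogeneous oblique problem gives $\psi(x)/\psi(y) \leq C$ on the same smaller set. Transferring these bounds via the diffeomorphism $(x', t) \mapsto (x', \phi_1(x', t))$ and using the pointwise equivalence $v \sim \psi$ then delivers the two estimates in \eqref{hhodo}.

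The main obstacle is verifying the linearization carefully: one must check that the quasilinear operator $Q$ and the nonlinear boundary operator linearize to a uniformly elliptic $L$ and a uniformly oblique $B$ with $C^\alpha$ coefficients, all with universal constants independent of the particular pair $(u_1, u_2)$. This is ensured by the $C^{2,\alpha}$-closeness of both $\phi_i$ to the flat profile $\phi \equiv x_n$, which reduces the setup to a small perturbation of $\Delta \psi = 0$ in a half-space with Neumann condition $\partial_t \psi = 0$; for this reference problem Schauder and Harnack are standard.
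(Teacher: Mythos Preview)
Your proposal is correct and follows essentially the same route as the paper: both apply the partial hodograph transform to each $u_i$, subtract to obtain a nonnegative $\psi$ solving a linear uniformly elliptic equation with a regular oblique boundary condition on the flat half-space, invoke Schauder and Harnack for that problem, and transfer the bounds back to $v$ via the relation $u_1(x)=u_2(x',x_n-\psi(x',u_1(x)))$. Your write-up is in fact slightly more explicit than the paper's about the pointwise equivalence $v\sim\psi$ and about why the linearized operator and boundary condition have universal ellipticity/obliqueness constants.
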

\begin{proof}Let $k=1,2$. In view of the flatness assumption \eqref{flat}, if $\eps_1$ is chosen small (depending on $n$) such that the improvement of flatness Theorem in \cite{C2} holds, then $\|u_k\|_{C^3(B_{3/4})} \leq C,$ with $C$ universal. Thus, for $c>0$ universal, (again by \eqref{flat}),
$$1-c \leq \p_n u_k \leq  1+c, \quad  \text{in } B_{3/4}\cap \{u_k > 0\}.$$ %\text{in $B_{3/4}^+(u_k).$}
Then, we can perform the partial hodograph transform  $y'=x', y_n = u_k(x)$, with $\mathcal H_{u_k}$ the inverse mapping given by the partial Legendre transform,
and obtain
\begin{equation}\label{SPbar}\begin{cases}
 \mathcal F( D^2 \mathcal H_{u_k}, \nabla \mathcal H_{u_k}) =0  \quad \text{in $B_{1/2} \cap  \{y_n>0\}$},\\
g( \nabla \mathcal H_{u_k})=0, \quad \text{on $B_{1/2} \cap \{y_n=0\}$}.
\end{cases}\end{equation}The free boundary of $u_k$ is given by the graph of the trace of $\mathcal H_{u_k}$ on $\{y_n=0\}$. 

 The difference $\psi:=\mathcal H_{u_2} -\mathcal H_{u_1} \geq 0$ will then satisfy an equation of the form
$$a^{ij}\p_{ij}\psi +b^i \p_i \psi=0 \quad \text{in $B_{1/2} \cap \{y_ n>0\}$}$$ with boundary condition,
$$\psi_n= d^i \p_i \psi \quad \text{on $B_{1/2} \cap \{y_n=0\}$},$$ 
and $a^{ij}, b^i$ H\"older continuous and $d^i \in C^{1,\alpha}$ with norms depending on the $C^{2,\alpha}$ norm of $\mathcal H_{u_k}$ which in turn depends on the $C^{2,\alpha}$ norm of $u_k$ (hence is bounded by a universal constant). By the standard regularity estimates in Theorem \ref{hn}, the $C^{2,\alpha}$ norm of $\psi$ is controlled by its $L^\infty$ norm. Moreover by the Harnack inequality, Theorem \ref{hn}, $\psi \sim \|\psi\|_\infty$ in $B_{1/4} \cap \{y_n>0\}$. Thus,  
$$u_1(x) = u_2(x', x_n- \psi(x', u_1(x))), \quad x\in B_{1/8} \cap \overline{\{u_1>0\}},$$
for $\psi$ as above, from which our claim follows.
\end{proof}

We conclude the Appendix by stating the Schauder estimates and the Harnack inequality that we need for boundary problems.

\begin{thm}\label{hn} Let $v$ satisfy,
$$a^{ij}\p_{ij}v +b^i \p_i v + c v=f \quad \text{in $B_1 \cap \Omega,$}$$
$$ \beta \cdot \nabla v + h v= g \quad \text{on $B_1 \cap \p \Omega$},$$
with $a^{ij}$ uniformly elliptic, and $\beta \cdot \nu \geq c_0>0,$ and 
$$\Omega:= \{x_n> \phi(x')\}, \quad \phi(0)=0.$$
\begin{enumerate}
\item If $a^{ij}, b^i, c, f \in C^{0,\alpha},$ $\beta, h, g \in C^{1,\alpha},$ $\phi \in C^{2,\alpha},$ then $$\|v\|_{C^{2,\alpha}(B_{1/2} \cap \Omega)} \leq C (\|v\|_\infty + \|f\|_{0,\alpha}+\|g\|_{1,\alpha});$$
\item if $v \geq 0$, $a^{ij}, b^i, c, \beta, h \in C^{0,\alpha},$ $\phi \in C^1,$ then 
$$\sup_{B_{1/2} \cap \Omega} v \leq C (\inf_{B_{1/2}  \cap \Omega}v + \|f\|_\infty+\|g\|_\infty),$$ for $C>0$ depending on $n,$ the coefficients, the ellipticity constants, and $\Omega.$\end{enumerate}
\end{thm}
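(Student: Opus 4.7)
The plan is to reduce both estimates to the case of a flat boundary via the change of variables $y' = x'$, $y_n = x_n - \phi(x')$, which straightens $\p\Omega$ near the origin. This diffeomorphism has the same regularity as $\phi$ ($C^{2,\alpha}$ in part (i) and $C^1$ in part (ii)), preserves uniform ellipticity of the second-order operator with a H\"older-controlled modification of the coefficients, and transforms the oblique vector field $\beta$ into a new field $\tilde\beta$ on $\{y_n=0\}$ still satisfying $\tilde\beta\cdot e_n \geq c_0' > 0$. So I may assume $\Omega=\{x_n>0\}$ throughout.

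For part (i), my approach is the standard Schauder scheme of Campanato. First, I freeze coefficients at an arbitrary boundary point and study the model problem: a constant-coefficient uniformly elliptic equation with a constant oblique condition on $\{x_n=0\}$. A linear change of variables converts the oblique condition to the Neumann condition $\p_n v = g$ and normalizes the operator to the Laplacian, for which one has explicit Poisson-type representation in the half-space and hence sharp $C^{2,\alpha}$ decay estimates of the Campanato type. A perturbation/freezing argument then promotes the estimate to variable H\"older coefficients, with the lower-order terms $b^i\p_i v$, $cv$, $hv$ absorbed as forcing since $v$ is bounded and already H\"older at the boundary by De Giorgi--Nash--Moser.

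For part (ii), I would follow the Krylov--Safonov/Caffarelli route adapted to the oblique condition. The two building blocks are a \emph{weak Harnack inequality} for nonnegative supersolutions, controlling $\inf_{B_{1/2}\cap\Omega} v$ by an $L^p$-average on a larger set, and a \emph{local maximum principle} for subsolutions. The weak Harnack is obtained by constructing barriers which exploit the strict obliqueness $\beta\cdot\nu\geq c_0>0$ to extend the interior Alexandrov--Bakelman--Pucci estimate up to the boundary; these barriers are essentially paraboloids tilted along $\beta$ that still satisfy the oblique boundary inequality with room to spare. Combining the two estimates gives the full Harnack inequality in the classical way, with the forcing terms $f,g$ inserted by the usual ABP bookkeeping.

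The main obstacle in both parts is the handling of the oblique boundary condition. For a pure Neumann condition one could reflect across the flattened boundary and reduce to interior estimates, but a genuine oblique $\beta$ is not reflection-invariant. The right substitute is to use $\tilde\beta$ to generate boundary-adapted barriers and a boundary ABP estimate, as in Lieberman \cite{L}; once this groundwork is in place, the Schauder estimate in (i) and the Harnack inequality in (ii) follow by essentially the same mechanics as in the Dirichlet/interior theory.
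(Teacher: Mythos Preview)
Your outline is correct in spirit and would work: flattening the boundary, then running Campanato--Schauder for (i) and a Krylov--Safonov/ABP scheme with oblique barriers for (ii) is exactly the machinery developed in Lieberman's book \cite{L}, which you cite.

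The paper, however, does not attempt to prove Theorem~\ref{hn} at all. It simply records the result as being contained in \cite{L}, and the only substantive argument supplied is for a narrower point: in \cite{L} the Harnack inequality (ii) is stated under the sign conditions $c,h\le 0$, and the authors want it without those. Rather than rerunning the ABP/weak-Harnack machinery, they give a short direct argument for the one-sided Harnack estimate (stated as a separate lemma): a dilation makes $\|h\|_\infty$, $\|g\|_\infty$, and $\|\phi\|_{C^1}$ arbitrarily small, the interior Harnack inequality gives a lower bound on a ball away from $\p\Omega$, and then comparison with the explicit radial barrier $V=c_1(|x-\bar x|^{2-n}-(1/2)^{2-n})^+$ pushes the bound to the boundary, the oblique condition being used only to rule out a boundary minimum of $v-V$ via Hopf. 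So your proposal reconstructs the general theory, while the paper bypasses it by citation plus one elementary barrier computation tailored to the specific application in Proposition~\ref{vas}.
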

Theorem \ref{hn} is contained for example in \cite{L}. However $(ii)$ is stated under a sign assumption on the lower order coefficients, that is $c, h \leq 0$, which is needed for the existence theory. We point out that these assumptions are not needed for the 
Harnack inequality, since after a dilation it suffices to prove the result when the lower order coefficients are arbitrarily small, and the standard arguments continue to hold. Since we could not find an explicit reference, we provide below a sketch of the proof of the one-sided Harnack inequality which we used in Proposition \ref{vas}.

\begin{lem} Let $v \geq 0$ satisfy 
$$\Delta v=0 \quad \text{in $B_1 \cap \Omega$},$$
$$\p_\nu v + h v =g, \quad \text{in $B_1 \cap \p \Omega$},$$ with $\Omega$ as in Theorem $\ref{hn},$ and $\nu$ the inner unit normal to $\p \Omega$. If $v(\bar x) \geq 1$ at $\bar x = \frac 1 4 e_n,$ then
$$\inf_{B_{1/2 \cap \Omega}} v \geq c- C \|g\|_\infty,$$ for $c, C>0$ depending on $n, \|h\|_\infty, \Omega$.

\end{lem}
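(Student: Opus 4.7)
The plan is to reduce to the case of small $\|h\|_\infty$ by rescaling, then to decompose $v$ into a nonnegative homogeneous piece (to which the classical Neumann Harnack applies) plus an inhomogeneous piece of small sup-norm.

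First I would observe that under the dilation $v_\rho(x) := v(\rho x)$ the equation $\Delta v_\rho = 0$ is preserved, while the boundary condition becomes $\p_\nu v_\rho + \rho h(\rho \cdot) v_\rho = \rho g(\rho \cdot)$, so the effective boundary coefficient is $\rho h$. By choosing $\rho$ depending only on $\|h\|_\infty$, we may reduce to the case $\|h\|_\infty \le \eps_0$ for $\eps_0$ a small universal constant. The Harnack is then stated on a smaller ball, but a standard chaining argument (cover a path from $\bar x$ to any point of $B_{1/2}\cap\Omega$ by overlapping balls of radius comparable to $\rho$, applying the small-$h$ Harnack at each step) transfers the estimate back to the original scale; the number of balls in the chain depends on $\|h\|_\infty$ and $\Omega$, and this is absorbed into the constants $c,C$.

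Once $\|h\|_\infty \le \eps_0$, I would decompose $v = v_0 + v_1$, where $v_1$ is the unique weak solution of
\[\Delta v_1 = 0 \text{ in } B_1 \cap \Omega, \quad \p_\nu v_1 + h v_1 = g \text{ on } B_1 \cap \p\Omega, \quad v_1 = 0 \text{ on } \p B_1 \cap \Omega,\]
provided by Lax--Milgram (coercivity is available since $\eps_0$ is small and we have a full Dirichlet condition on $\p B_1 \cap \Omega$), with the sup-estimate $\|v_1\|_\infty \le C\|g\|_\infty$ coming from the Alexandrov--Bakelman--Pucci bound or from Theorem \ref{hn}(ii) applied to the difference $v_1$. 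Then $v_0 := v - v_1$ solves the homogeneous problem ($g=0$) with $v_0 = v \ge 0$ on $\p B_1 \cap \Omega$, and the maximum principle (valid for $\eps_0$ small) gives $v_0 \ge 0$ in $B_1 \cap \Omega$. At $\bar x$ one has $v_0(\bar x) \ge v(\bar x) - |v_1(\bar x)| \ge 1 - C\|g\|_\infty$.

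The remaining step is the one-sided Harnack inequality for the homogeneous problem: if $w \ge 0$ satisfies $\Delta w = 0$ and $\p_\nu w + hw = 0$ with $\|h\|_\infty \le \eps_0$, then $\inf_{B_{1/2}\cap\Omega} w \ge c\, w(\bar x)$. Applied to $w = v_0$ this yields $\inf v_0 \ge c(1 - C\|g\|_\infty)$, and combining with $|v_1| \le C\|g\|_\infty$ delivers the claim. I expect this homogeneous Harnack to be the main obstacle, since we do not assume a sign on $h$. I would handle it in one of two ways: straighten $\p\Omega$ via a $C^1$ change of variables and use the reflection across $\{y_n=0\}$ (the $hw$ term becomes an $L^\infty$ coefficient of size $\eps_0$ in the reflected equation, so Krylov--Safonov applies on the resulting ball); or run Moser iteration directly, with the boundary contribution of $hw$ controlled via a trace inequality and absorbed for $\eps_0$ small. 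Either route makes precise the paper's remark that the sign assumption on $h$ is used only for existence/coercivity, and that once the lower-order coefficient is made arbitrarily small by rescaling, the standard proofs of the Harnack estimate go through unchanged.
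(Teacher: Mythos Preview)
Your approach is correct in outline but differs substantially from the paper's. The paper does not decompose $v$ or invoke any abstract Harnack machinery; instead it runs a single barrier comparison. After the same rescaling to make $\|h\|_\infty,\|g\|_\infty,\|\phi\|_{C^1}\le\eps_0$, the interior Harnack gives $v\ge c_0$ on $B_{1/8}(\bar x)$, and one compares $v$ directly with the truncated fundamental solution $V=c_1\bigl(|x-\bar x|^{2-n}-(1/2)^{2-n}\bigr)^+$ on $B_{1/2}(\bar x)\cap\Omega$. A negative minimum of $v-V$ cannot lie in the interior; at a boundary point $x_0\in\p\Omega$ one has $\p_\nu V(x_0)\ge c_2>0$ (the gradient of the fundamental solution points toward $\bar x$, which sits above $\p\Omega$), forcing $\p_\nu v(x_0)\ge c_2$, while the boundary condition and $v(x_0)<V(x_0)$ give $\p_\nu v(x_0)\le\eps_0+\eps_0 V(x_0)$, a contradiction for $\eps_0$ small. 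This is entirely elementary and self-contained: no existence theory, no Moser iteration, no reflection.

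Your route---split off $v_1$ absorbing $g$, then apply a homogeneous Robin Harnack to $v_0$---also works, and has the merit of being modular (the same decomposition handles more general operators). Two remarks: the maximum principle you invoke for $v_0\ge0$ does require the smallness of $h$ (it follows from positivity of the first mixed Dirichlet--Robin eigenvalue, which perturbs from the $h=0$ case); and your reflection claim is slightly off, since even reflection of a function satisfying $\p_\nu w+hw=0$ produces a transmission condition $[\p_n\tilde w]=-2hw$ on the hyperplane rather than an $L^\infty$ zeroth-order coefficient, so Krylov--Safonov does not apply directly---your Moser alternative, with the boundary term $\int_{\p\Omega}hw^p$ absorbed via trace and smallness, is the cleaner justification.
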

\begin{proof} After a dilation, we may reduce to the case when $\|h\|_\infty, \|g\|_\infty, \|\phi\|_{C^1} \leq \eps_0$ for some $\eps_0$ small, and we need to show that
$$v \geq \frac c 2, \quad \text{in $B_{r_0} \cap \Omega$, for some small $r_0$}.$$ 

By the interior Harnack inequality, $v \geq c_0$ on $B_{1/8}(\bar x)$.
Let
$$V:= c_1(|x-\bar x|^{2-n} - (\frac 1 2)^{2-n})^+, \quad \text{in $B_{1/2}(\bar x)$}$$ with $c_1:=c_0((1/8)^{2-n} - (1/2)^{2-n})^{-1}.$
We claim that,
$$v \geq V \quad \text{on $B_{1/2}(\bar x) \cap \Omega$},$$ from which the desired bound follows.
Indeed, let $w:= v- V$ and assume $w(x_0):= \min_{\bar B_{1/2}(\bar x) \cap \bar\Omega}w <0$. By the maximum principle, $x_0$ cannot occur in $(B_{1/2}(\bar x) \setminus B_{1/8}(\bar x)) \cap \Omega$. Thus, we only need to rule out that $x_0 \in B_{1/2}(\bar x) \cap \p \Omega$. On the other hand, at such point $w_\nu(x_0) \geq 0$, hence $v_\nu(x_0) \geq V_\nu(x_0) \geq c_2>0,$ if $\eps_0$ is chosen small. Therefore,
$$\eps_0 + \eps_0V(x_0) \geq \eps_0 + \eps_0 v(x_0)  \geq g(x_0) -h(x_0) v(x_0)=\p_\nu v(x_0) \geq c_2,$$
and we reach a contradiction by choosing $\eps_0$ small enough.
\end{proof}

\end{document}